\def\RR{\mathbb{R}}
\def\ltord{L^2(I; \mathbb{R} ^d)}
\def\GG{\mathcal{G}}
\def\ZZ{\mathbb{Z}}
\def\SS{\mathbb{S}}
\def\TT{\mathbb{T}}
\def\Tt{\mathrm{T}}
\def\eps{\varepsilon}
\def\DD{\mathcal{D}}
\def\Pp{\mathcal{P}}
\def\ltordz{L^2 (I; \ZZ ^d)}
\def\xd{\dot{x}}
\def\to{\rightarrow}
\def\tx{\textbf{x}}
\def\ty{\textbf{y}}
\def\tp{\textbf{p}}
\def\tq{\textbf{q}}
\def\dw{\text{dist}_{weak}}
\numberwithin{equation}{section}
\newtheorem*{thm5.1}{Theorem 5.1}
\newtheorem*{thm5.2}{Theorem 5.2}
\newtheorem*{thm4.3}{Theorem 4.3}
\newtheorem{theorem}{Theorem}[section]
\newtheorem{lemma}{Lemma}[section]
\newtheorem{proposition}{Proposition}[section]
\newtheorem{corollary}{Corollary}[section]
\newtheorem{definition}{Definition}[section]
\newtheorem{remark}{Remark}[section]
\begin{document}

\centerline{\textbf{An infinite-dimensional Weak KAM theory via random variables}}

\vskip .5 cm

\centerline{Diogo Gomes\footnote{King Abdullah University of Science and Technology (KAUST), CEMSE Division , Thuwal 23955-6900, Saudi Arabia.\\
email: diogo.gomes@kaust.edu.sa.
}}
\centerline{and}
\centerline{Levon Nurbekyan\footnote{King Abdullah University of Science and Technology (KAUST), CEMSE Division , Thuwal 23955-6900, Saudi Arabia.\\
		email: levon.nurbekyan@kaust.edu.sa}}

\vskip 0.5 cm

\centerline{Abstract}

\vskip 0.5 cm

%{\color{red}
%\textbf{Alternative abstract}

We develop several aspects of the infinite-dimensional Weak KAM theory using a random variables' approach. We prove that the infinite-dimensional cell problem admits a viscosity solution that is a fixed point of the Lax-Oleinik semigroup. Furthermore, we show the existence of invariant minimizing measures and calibrated curves defined on $\RR$.

\vskip .5 cm

\leftline{Keywords: dynamical systems, weak KAM theory, calculus of variations}
\leftline{MSC2010 number: 49J99, 49L25, 49L20, 37K99}

%\paragraph{Acknowledgments}

D. Gomes was partially supported by KAUST baseline and start-up funds and KAUST SRI, Center for Uncertainty Quantification in Computational Science and Engineering. 

\section{Introduction}

In this paper, we study dynamical systems with an infinite number indistinguishable particles on a $d$-dimensional torus $\Tt^d$. We extend and apply methods from the \textit{Weak KAM theory} \cite{F,fathi'11,fathi'97a,fathi'97b,fathi'98,fathigiuliani'09,fathimaderna'07,fathisiconolfi'04,fathisiconolfi'05,bernard'07,bernard'12,bernardbuffoni'07,gomes'00,gomes'02,gomes'03a,gomes'03b} to the infinite-dimensional setting using a random variables' approach. In particular, we prove that the \textit{infinite-dimensional cell problem} admits a viscosity solution that is a fixed point of the Lax-Oleinik semigroup. Furthermore, we construct \textit{invariant subsets} and \textit{invariant minimizing measures} under the Lagrangian dynamics. Finally, we obtain the existence of \textit{calibrated curves} defined on $\RR$.

Infinite-dimensional systems (\textit{infinite systems} for short) arise in the study of mechanical systems with a large number of identical particles (e.g. fluids and gases). In these models, the number of particles is infinite and the state
of the system is determined by a probability measure or, alternatively, by a random variable. The evolution of the system is characterized by an ODE in the space of probability measures or the space of random variables. A well-known example of such an ODE is the non-linear Vlasov system \cite{braunhepp'77,dobrushin'79,maslov'78,tudgangbo'10a,tudgangbo'10b,tudgangbo'12}.

In the seminal papers \cite{tudgangbo'10a,tudgangbo'10b}, Gangbo and Tudorascu introduced and developed the weak KAM theory for infinite systems. Next, they considered an infinite system of particles on the torus $\Tt^1$ and modeled it using $L^2([0,1])$ functions as random variables. They introduced the \textit{infinite-dimensional torus} and proved a \textit{Weak KAM theorem} on it. Subsequently, in \cite{tudgangbo'12}, these authors addressed the higher-dimensional case on the torus $\Tt^d$ for $d> 1$. This case is studied using probability measures over $\Tt^d$. They generalize core aspects of the one-dimensional problem to higher dimensions.

In \cite{bessi'12,bessi'13,bessi'14,bessi'14a}, Bessi examined 
infinite systems in the framework of the Aubry-Mather theory. In particular, in \cite{bessi'13}, the author studied the Aubry-Mather minimal measure theory in the infinite-dimensional setting for $d=1$.

Crucial aspects of the previous results are the following. Firstly, in the one-dimensional case, the existence of optimal trajectories is proved for monotone and square integrable initial configurations of particles. This technique is used to overcome the fact that $L^2([0,1])$ is not locally compact. Monotonicity yields compactness, and that makes it possible to extend finite-dimensional methods. Unfortunately, this technique does not generalize to higher dimensions.

Secondly, in prior publications, the higher-dimensional case was studied via the probability measures approach. The space of probability measures is a metric space, and it does not have a natural linear structure. This fact creates additional difficulties. 
A standard solution to define the velocity of a curve
is to consider velocity fields of minimal norm \cite{ambgigsavare'08}. As pointed out in \cite{tudgangbo'12}, this is not the suitable notion to develop the Weak KAM on the space of probability measures. The appropriate derivatives are the $c$-\textit{minimal velocity fields}. However, these  depend on the choice of $c \in \RR ^d$. Therefore, the definition of the viscosity solution of the cell problem depends on  $c$.

Thirdly, for any dimension $d\geq 1$, there exist \textit{weakly invariant minimizing measures} (or \textit{minimizing holonomic measures}) \cite{tudgangbo'10a,tudgangbo'12}.
Moreover, if $d=1$ there exist \textit{invariant} (or \textit{strongly invariant}) \textit{minimizing measures}  \cite{bessi'13}. The existence of invariant minimizing measures in the case $d > 1$ was not settled previously. 

Finally, the Lagrangians considered in previous publications are  mechanical Lagrangians that are the sum of kinetic and potential energy.  

In this paper, we contribute to the existing results in several directions. For any dimension $d\geq 1$, we address the following points:
\begin{itemize}
	\item[i.] For any $c \in \RR ^d$ and for generic initial configurations of the particles, we prove the existence of the optimal trajectories for the discounted cost infinite horizon problem.
	\item[ii.] For any $c\in \RR^d$, we prove that the infinite-dimensional cell problem admits a viscosity solution $U$ and that this solution is a fixed point for Lax-Oleinik semigroup. Moreover, we show the existence of $(U,c,L)$-calibrated curves defined on $\RR ^+$, where $L$ is the Lagrangian of the system.
	\item[iii.] We show the existence of invariant minimizing measures and $(U,c,L)$-calibrated curves defined on $\RR$.
	\item[iv.] We consider general Lagrangians (i.e., infinite-dimensional Tonelli Lagrangians).
\end{itemize}

In what follows, we present the statements of our main results and give a detailed description of our methods.

\subsection{Main results and the outline of the paper}

Let $I=[0,1]^d$ and $\lambda _0$ be the Lebesgue measure restricted to $I$.
A curve $\sigma : (0, T) \rightarrow {L^2(I; \RR^d)}$,  $t \mapsto \sigma_t$,
is absolutely continuous if there exists $\beta \in L^1(0,T)$ such that
\begin{equation}\label{acineq}
\|\sigma_t -\sigma_s\| \leq \int_s^t{\beta(u) du},
\end{equation}
for every $s < t$ in $(0,T)$. We denote by $AC^2(0,T ; L^2(I; \RR^d))$ the set of all absolutely continuous paths $\sigma : (0, T) \rightarrow L^2(I; \RR^d)$ such that there exists $\beta \in L^2(0,T)$ satisfying \eqref{acineq}. The set $AC^2_{loc}(0,\infty ; L^2(I; \RR^d))$
is the set of curves $\sigma:(0, \infty) \rightarrow {L^2(I; \RR^d)}$, whose restriction
to $(0,T)$ is in $AC^2(0,T ; L^2(I; \RR^d))$, for all $T>0$.

In this paper, we assume that the Lagrangian $L : \ltord \times \ltord \to \RR$ satisfies the conditions $i)-viii)$ given in Subsection \ref{assumptions}.
An important example that satisfies these conditions is 
the mechanical Lagrangian
\begin{equation*}L(M,N)=\frac{\|N\|^2_{L^2(I; \RR^d)}}{2}-\int_{I \times I}W(M(z) -M (\bar{z}))d\lambda_0(z) d\lambda_0(\bar{z}),
\end{equation*}
where $W \in C^2(\Tt ^d)$ is an interaction potential.
	
%In Section \ref{sec: L2torus&pt^d} we introduce the  $L^2$-torus $\mathbb{T} ^d$ and discuss its main properties. Then, we consider the semimetric space $\TT ^d / \mathcal{G}$ and its \emph{metrization} $\mathbb{S} ^d$. In particular, we prove that $\mathbb{S} ^d$ is isometric to the Wasserstein space $(\mathcal{P}(\mathrm{T} ^d), W_2)$. We end Section \ref{sec: L2torus&pt^d} with the characterization of the de Rham equivariant cohomology of the $L^2$-torus $\TT ^d$.	

%We will use $c$ instead of $c \chi_I$ where there is no possible confusion.

%In Section \ref{sec: prelimHJeq}, we study the discounted cost infinite horizon problem
%\eqref{Eq: InfiniteHorizone}. In particular, 
%we obtain a-priori bounds for the value function $V_{\eps}$ and show that it is a viscosity solution of the Hamilton-Jacobi %equation \eqref{Eq: HJinfinitehorizon}.

In Section \ref{sec: prelimHJeq}, we consider the discounted-cost infinite-horizon problem.
For $c \in \RR ^d$, we set
\begin{equation}\label{eqLC}
L_c(M,N)=L(M,N) + \int_I c \cdot N d\lambda_0.
\end{equation}
We fix $\varepsilon>0$, and  
for a trajectory $\tx \in AC^2_{loc}((0,\infty) ; L^2(I; \RR ^d))$,
we define the action
\begin{equation*}
\mathcal{A}_{\varepsilon}(\tx) := \int_{0}^{\infty}{ e^{- \varepsilon t} L_c(\tx, \dot{\tx}) dt}.
\end{equation*}
Since $L_c$ is bounded by below, $\mathcal{A}_{\eps}$ is well defined. Set
\begin{equation}\label{Eq: InfiniteHorizone}
V_{\varepsilon}(M) := \inf_{\tx} \{ \mathcal{A}_{\varepsilon} (\tx) : \tx \in AC^2_{loc}((0, \infty); L^2(I; \RR ^d)),\ \tx(0)=M \}.
\end{equation}

The Hamiltonian is the Legendre transform of $L$ given by
\begin{equation}
	\label{eq: HamiltonianDef}
H(M,P)=\sup_{N \in \ltord}\{-\langle P,N \rangle-L(M,N)\},
\end{equation}
for $(M,P) \in \ltord \times \ltord$. The Hamiltonian $H_c$ associated 
with $L_c$ is given by
\begin{equation*}\label{eq: Hc}
H_c(M,P)=H(M,P+c).
\end{equation*}

We have that (see \cite{crandalllions'86a, ishii'92, tataru'92, gomesnurbekyan'12a,bessi'13}) $V_{\varepsilon}$ is a viscosity solution of
\begin{equation}\label{Eq: HJinfinitehorizon}
\eps V_{\eps}(M)+H_c(M, \nabla V_{\eps}(M))= 0.
\end{equation}

Our first result is:
\begin{theorem}
	\label{TEO1}
For every differentiability point $M$ of $V_{\eps}$, there exists a unique minimizer $\tx ^* \in C^1\left([0,\infty), \ltord \right)$ of \eqref{Eq: InfiniteHorizone} with the initial condition $\tx^*(0)=M$. Furthermore, $\dot{\tx}^*(0)=-D_p H_c(M,\nabla V_\eps (M))$ and $\tx^*$ solves the Euler-Lagrange equation, that is
\begin{equation*}\label{eq: ELforVeps}
 \frac{d}{ds} \left(e^{-\eps s}D_v L_c(\tx^*, \dot{\tx}^*)\right)= e^{-\eps s} D_xL_c(\tx^*, \dot{\tx}^*).
\end{equation*}
\end{theorem}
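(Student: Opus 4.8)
The plan is to proceed via a standard Weak KAM / viscosity-solution argument, adapted to the infinite-dimensional $L^2(I;\RR^d)$ setting. Fix a differentiability point $M$ of $V_\eps$. First I would establish the existence of \emph{some} minimizer $\tx^*$ of \eqref{Eq: InfiniteHorizone} with $\tx^*(0)=M$; this is where the genuine difficulty lies, since $L^2(I;\RR^d)$ is not locally compact and one cannot directly extract a convergent subsequence from a minimizing sequence. The standard remedy (as in point (i) of the introduction, and presumably the content of an earlier lemma in Section \ref{sec: prelimHJeq}) is to use coercivity of $L_c$ in the velocity variable together with the discount factor $e^{-\eps t}$ to bound $\dot{\tx}_n$ in $L^2_{loc}((0,\infty);L^2(I;\RR^d))$ uniformly, pass to a weak limit $\dot{\tx}_n \rightharpoonup \dot{\tx}^*$ on each interval $(0,T)$ via a diagonal argument, note that $\tx_n \to \tx^*$ pointwise/locally uniformly (Arzel\`a--Ascoli in $t$ for the $L^2(I;\RR^d)$-valued curves, whose equicontinuity follows from \eqref{acineq}), and then invoke lower semicontinuity of $\mathcal{A}_\eps$ under this mode of convergence, which uses convexity of $N\mapsto L_c(M,N)$ together with the continuity assumptions on $L$. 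The boundedness from below of $L_c$ guarantees the action is well defined along the limit.

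Once a minimizer $\tx^*$ exists, I would derive the Euler--Lagrange equation
\begin{equation*}
\frac{d}{ds}\left(e^{-\eps s}D_v L_c(\tx^*,\dot{\tx}^*)\right)=e^{-\eps s}D_x L_c(\tx^*,\dot{\tx}^*)
\end{equation*}
by the usual first-variation computation: perturb $\tx^*$ by $\tx^*+\tau\,\phi$ with $\phi \in C_c^\infty((0,\infty);L^2(I;\RR^d))$, differentiate $\mathcal{A}_\eps$ at $\tau=0$ using the regularity/growth hypotheses $i)$--$viii)$ on $L$ to justify differentiation under the integral, and conclude the weak form of the equation; then the regularity of $L$ (in particular invertibility and smoothness of $D_v L$, i.e. the Tonelli-type conditions) bootstraps $\tx^*$ to $C^1$, in fact $C^2$, and yields the classical Euler--Lagrange equation. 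For the identification $\dot{\tx}^*(0)=-D_pH_c(M,\nabla V_\eps(M))$: since $M$ is a differentiability point of $V_\eps$, the value function satisfies the dynamic programming principle $V_\eps(M)=\int_0^h e^{-\eps t}L_c(\tx^*,\dot{\tx}^*)dt + e^{-\eps h}V_\eps(\tx^*(h))$; differentiating in $h$ at $h=0$ along the optimal curve and using that $V_\eps$ solves \eqref{Eq: HJinfinitehorizon} in the viscosity (hence, at differentiability points, classical) sense gives $\langle \nabla V_\eps(M),\dot{\tx}^*(0)\rangle = -L_c(M,\dot{\tx}^*(0)) - \eps V_\eps(M)$, while the Hamilton--Jacobi equation gives $\eps V_\eps(M) = -H_c(M,\nabla V_\eps(M))$; combining and using the equality case of the Legendre--Fenchel inequality $H_c(M,P)+L_c(M,N)\ge -\langle P,N\rangle$ forces $\dot{\tx}^*(0) = -D_pH_c(M,\nabla V_\eps(M))$, which also pins down the initial velocity uniquely.

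Finally, uniqueness of the minimizer follows because its initial velocity is now determined by $M$ (via the formula above, which in turn requires differentiability of $V_\eps$ at $M$), and the Euler--Lagrange flow is an ODE in $L^2(I;\RR^d)$ with locally Lipschitz right-hand side — here one rewrites the second-order equation as a first-order system in $(\tx^*,D_vL_c(\tx^*,\dot{\tx}^*))$ and appeals to the Picard--Lindel\"of theorem in the Banach space $L^2(I;\RR^d)\times L^2(I;\RR^d)$, using the smoothness of $H_c$ granted by hypotheses $i)$--$viii)$; hence $(\tx^*(0),\dot{\tx}^*(0))$ determines $\tx^*$ on all of $[0,\infty)$, and global existence follows from the a priori bounds coming from finiteness of the action. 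The main obstacle, as flagged, is the first step: securing compactness of minimizing sequences despite the lack of local compactness of $L^2(I;\RR^d)$, which is handled by trading strong for weak convergence of velocities and leaning on convexity for lower semicontinuity of the action; the rest is, modulo the careful but routine verification that the assumptions on $L$ permit all the differentiations and the ODE setup, classical calculus of variations.
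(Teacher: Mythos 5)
Your plan for the existence step is the one that is known \emph{not} to work in this setting, and this is a genuine gap. You propose the direct method: bound $\dot{\tx}_n$ in $L^2_{loc}$, extract weak limits of the velocities, get locally uniform convergence of the positions ``by Arzel\`a--Ascoli'', and conclude by lower semicontinuity using convexity in the velocity. But Arzel\`a--Ascoli requires pointwise relative compactness of $\{\tx_n(t)\}$ in $L^2(I;\RR^d)$, which fails precisely because $L^2(I;\RR^d)$ is not locally compact: the uniform bound on $\int_0^T\|\dot{\tx}_n\|^2$ gives equicontinuity only, so the positions converge at best \emph{weakly} at each time. Lower semicontinuity of the action then breaks down, because the assumptions make $L$ convex in the velocity but only strongly continuous in the position; for the mechanical example the interaction term $\int_{I\times I}W(M(z)-M(\bar z))\,d\lambda_0\,d\lambda_0$ is not weakly continuous (rapidly oscillating $M_n$ give a strict gap in the limit), so $\liminf \mathcal{A}_\eps(\tx_n)\geq \mathcal{A}_\eps(\tx^*)$ cannot be inferred. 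This is exactly the obstruction that made existence of minimizers an open problem for $d>1$ (Remark 3.4 of \cite{tudgangbo'10a}, quoted in the introduction), and which Gangbo--Tudorascu could only bypass in $d=1$ via monotone rearrangements and Helly's theorem. A telltale sign of the gap is that your existence argument never uses the hypothesis that $M$ is a differentiability point of $V_\eps$ -- if the direct method worked, the theorem would hold at every $M$.

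The paper's proof goes a different way: it invokes Theorem 6.2 of \cite{gomesnurbekyan'12a}, which establishes, for calculus-of-variations problems in Hilbert spaces, the existence and uniqueness of a $C^1$ minimizer at every differentiability point of the value function (the infinite-horizon problem being viewed as a finite-horizon one via the dynamic programming principle), together with the Euler--Lagrange equation from \cite{nurbekyan'12}. There, differentiability of $V_\eps$ at $M$ is the engine of the existence proof -- in essence one generates the candidate trajectory from the data $(M,\nabla V_\eps(M))$ by the Hamiltonian dynamics and verifies its optimality using the Hamilton--Jacobi equation, rather than compactifying a minimizing sequence. Your later steps (first variation for the Euler--Lagrange equation, identification $\dot{\tx}^*(0)=-D_pH_c(M,\nabla V_\eps(M))$ via the dynamic programming principle and the equality case of the Legendre--Fenchel inequality, and uniqueness via Picard--Lindel\"of for the associated first-order system) are sound and consistent with the paper's framework, but they presuppose a minimizer; as written, the proposal does not deliver one.
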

To date, 
the existence of the minimizers for higher dimensions has been an open problem (see Remark 3.4 in \cite{tudgangbo'10a}). Our proof is based on the techniques that we developed regarding the existence of minimizers of the optimal control problem in Hilbert spaces \cite{nurbekyan'12, gomesnurbekyan'12a}. 

In Section \ref{sec: thecellproblem}, we present the proofs of our main results. Firstly, we extract a convergent subsequence out of the family of the functions $\{\eps V_{\eps}\}$ and $\{U_{\eps}:=V_{\eps}-\inf V_{\eps}\}$
\begin{equation}
\label{udef}
-\bar{H}(c)=\lim\limits_{\eps \to 0} \eps V_{\eps}, \qquad U=\lim_{\eps\to 0} U_{\eps}.
\end{equation}
We prove the following theorem.
\begin{theorem}
\label{Thm: MainU} 
Let $U$ and $\bar{H}(c)$ be given by \eqref{udef}. 
Then 
$U$ is a viscosity solution of the equation
\begin{equation*}
\label{cellP}
H_c(M,\nabla U)=\bar{H}(c).
\end{equation*}
Additionally, for every differentiability point $M \in L^2(I; \RR ^d)$ of $U$, there exists a unique trajectory $\tx^* \in C^1([0,\infty);L^2(I; \RR ^d))$
whose restriction to the interval $[0,T]$ 
 is a minimizer of
\begin{equation}\label{Eq: MinProblemforU} U(M)=\inf \{\int_0^T{L_c(\tx(s), \dot{\tx}(s))+\bar{H}(c) ds}+U(\tx(t)); \tx(0)=M \},
\end{equation}
for any $T>0$. The infimum is taken over the curves in $AC^2((0,T);L^2(I; \RR ^d))$.
Moreover, $\tx^*$ satisfies the Euler-Lagrange equation
\begin{equation*}
\frac{d}{ds} D_v L_c(\tx^*, \dot{\tx}^*)=D_x L_c(\tx^*, \dot{\tx}^*),
\end{equation*}
with $\tx^*(0)=M$ and $\dot{\tx}^*(0)= -D_p H_c(M, \nabla U(M))$.

%Furthermore, $\tx$ minimizes the action %$\tilde{\mathcal{A}}_{t}(\ty)=\int_0^t{\left(L_c(\ty(s),\dot{\ty}(s))+\bar{H}(c)\right) %ds}$ over all trajectories $\ty \in AC^2((0,t);L^2(I; \RR ^d))$ with endpoints %$\ty(0)=\tx(0)$ and $\ty(t)=\tx(t)$.
\end{theorem}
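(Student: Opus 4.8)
The plan is to obtain $U$ and $\bar H(c)$ as limits of the discounted problem and transfer the structural properties established in Theorem \ref{TEO1} to the limit. First I would recall the standard a priori bounds: since $L_c$ is bounded below and coercive, the family $\{\eps V_\eps\}$ is uniformly bounded and $\{U_\eps = V_\eps - \inf V_\eps\}$ is equi-Lipschitz on bounded sets of $\ltord$ (the Lipschitz constant coming from the coercivity/superlinearity of $L$, via the usual comparison of competitor curves). Hence, along a subsequence $\eps \to 0$, $\eps V_\eps \to -\bar H(c)$ (a constant, by the equi-continuity argument: oscillation of $\eps V_\eps$ tends to $0$) and $U_\eps \to U$ locally uniformly. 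Passing to the limit in the viscosity sense in \eqref{Eq: HJinfinitehorizon} — using that $\eps V_\eps(M) = \eps U_\eps(M) + \eps \inf V_\eps \to -\bar H(c)$ and that $H_c$ is continuous — yields that $U$ is a viscosity solution of $H_c(M,\nabla U) = \bar H(c)$. This is the soft part; the stability of viscosity solutions under local uniform convergence in Hilbert space is already available from the references cited for \eqref{Eq: HJinfinitehorizon}.

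The substantive part is the existence, uniqueness, and Euler–Lagrange characterization of the calibrated curve through a differentiability point $M$ of $U$. The natural strategy is a diagonal/compactness argument starting from Theorem \ref{TEO1}. For each $\eps$, pick a differentiability point $M_\eps$ of $V_\eps$ with $M_\eps \to M$ (possible since differentiability points are dense, using that $U$ is differentiable at $M$ and the $U_\eps$ converge), and let $\tx^*_\eps$ be the corresponding minimizer of \eqref{Eq: InfiniteHorizone} given by Theorem \ref{TEO1}, which solves the (discounted) Euler–Lagrange equation with $\dot{\tx}^*_\eps(0) = -D_pH_c(M_\eps, \nabla V_\eps(M_\eps))$. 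One shows that on each compact time interval $[0,T]$ these curves have uniformly bounded $AC^2$-norm: this follows from the dynamic programming inequality
\begin{equation*}
U_\eps(M_\eps) \ge \int_0^T e^{-\eps s}\big(L_c(\tx^*_\eps,\dot{\tx}^*_\eps) + \eps V_\eps(M_\eps)\big)\,ds + \text{(boundary term)},
\end{equation*}
together with the superlinear lower bound on $L_c$, which controls $\int_0^T \|\dot{\tx}^*_\eps\|^2\,ds$. Then I would extract a subsequence converging weakly in $AC^2(0,T;\ltord)$ and uniformly on $[0,T]$ to a limit $\tx^*$, use lower semicontinuity of the action under weak convergence (convexity of $L_c$ in the velocity, condition among $i)$–$viii)$) to pass to the limit in the discounted optimality and recover, in the limit $\eps \to 0$, that $\tx^*$ is a minimizer of \eqref{Eq: MinProblemforU} for every $T>0$; a Cantor diagonal over $T \to \infty$ gives a single curve defined on $[0,\infty)$.

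The Euler–Lagrange equation for $\tx^*$ and the initial velocity $\dot{\tx}^*(0) = -D_pH_c(M,\nabla U(M))$ follow by passing to the limit in the corresponding statements for $\tx^*_\eps$: the term $e^{-\eps s}$ tends to $1$ and $\frac{d}{ds}(e^{-\eps s}D_vL_c) = \eps(\dots) + e^{-\eps s}\frac{d}{ds}D_vL_c$, so the discount contribution vanishes; continuity of $D_pH_c$ and $\nabla V_\eps(M_\eps) \to \nabla U(M)$ (which itself needs an argument — see below) give the initial condition; and the regularizing effect of the Euler–Lagrange flow for a Tonelli Lagrangian upgrades $\tx^*$ to $C^1$. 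Uniqueness of $\tx^*$ through a differentiability point is the classical weak KAM argument: any two calibrated curves emanating from $M$ must have the same initial velocity $-D_pH_c(M,\nabla U(M))$ because at a point of differentiability the calibration forces $\langle \nabla U(M), \dot{\tx}(0)\rangle = -L_c(M,\dot{\tx}(0)) - \bar H(c)$ with equality in the Legendre–Fenchel inequality, pinning down $\dot{\tx}(0)$ uniquely; then uniqueness for the Euler–Lagrange ODE finishes the argument.

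The main obstacle I anticipate is the lack of local compactness of $\ltord$: the uniform $AC^2$ bound gives only weak compactness of the velocities, so one must be careful that the limit curve $\tx^*$ is genuinely optimal and not merely a weak limit with strictly smaller action — this is where lower semicontinuity of the action and an upper bound matching $U(M)$ must be combined delicately, and it is precisely the point that fails for the naive approach in higher dimensions. A secondary technical point is justifying $\nabla V_\eps(M_\eps) \to \nabla U(M)$: this does not follow from $C^0$-convergence alone, but can be obtained by a viscosity-solution argument at the differentiability point $M$ of $U$ (touching $U$ from above and below by smooth functions and using the equations satisfied by $U_\eps$ and $U$), or alternatively bypassed by identifying $\dot{\tx}^*(0)$ directly from the calibration property of the limit curve rather than from convergence of gradients.
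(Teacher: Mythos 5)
Your first paragraph (the viscosity-solution part) is fine and essentially the paper's own route: the paper extracts the limits $U$ and $-\bar{H}(c)$ by reducing, via periodicity and rearrangement invariance, to the compact quotient $\SS^d$ and applying Arzel\`a--Ascoli (Proposition \ref{Prp: UandH(c)}); note that your ``oscillation of $\eps V_\eps$ tends to $0$'' argument needs exactly this reduction, since $\ltord$ itself is neither bounded nor locally compact.

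The substantive part, however, has a genuine gap, and it sits precisely at the two points you flag but do not resolve. Your plan is to pick differentiability points $M_\eps \to M$ of $V_\eps$, take the discounted minimizers $\tx^*_\eps$ from Theorem \ref{TEO1}, and pass to the limit. The uniform $AC^2$ bound gives only weak convergence of $\dot{\tx}^*_\eps$ and hence only \emph{weak} convergence of the states $\tx^*_\eps(s)$ in $\ltord$; but $L_c$ is not weakly lower semicontinuous in the state variable (already the interaction term $\int_{I\times I}W(M(z)-M(\bar z))\,d\lambda_0 d\lambda_0$ is not weakly continuous in $M$), so optimality of the weak limit does not follow from convexity in the velocity --- this is not a delicate technicality to be ``combined delicately'' but the very obstruction that makes this approach fail. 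The natural repair would be strong convergence of the curves via stability of the Hamiltonian flow, which requires \emph{norm} convergence $\nabla V_\eps(M_\eps)\to\nabla U(M)$; equi-semiconcavity plus uniform convergence only yields that weak cluster points of $\nabla V_\eps(M_\eps)$ lie in $D^{+}U(M)=\{\nabla U(M)\}$, and your proposed bypass (reading $\dot{\tx}^*(0)$ off the calibration identity of the limit curve) is circular, since calibration of the limit is exactly what is blocked. The paper avoids the approximation by discounted minimizers altogether: it first proves that $U$ is a fixed point of the Lax--Oleinik semigroup (Proposition \ref{Prp: OptCntrlforU}) by passing to the limit in inequalities only --- the near-optimal curves for $V_{\eps_n}$ are used as competitors, never compactified --- and that $U$ is semiconcave (Proposition \ref{Prp: USemiconcavity}); then, since $U$ is itself the value function of a finite-horizon problem with terminal cost $U$, the Hilbert-space result behind Theorem \ref{TEO1} (Theorem 6.2 of \cite{gomesnurbekyan'12a}) applies verbatim at each differentiability point of $U$ and yields existence, uniqueness, $C^1$ regularity, the Euler--Lagrange equation and $\dot{\tx}^*(0)=-D_pH_c(M,\nabla U(M))$ in one stroke. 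To salvage your scheme you would have to prove norm convergence of the gradients (a \v{S}mulyan-type argument exploiting Fr\'echet differentiability of $U$ at $M$); the fixed-point route is both simpler and what the paper actually does.
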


Our next key result is:     
\begin{theorem}
\label{t1p3}
For any $c \in \RR ^d$, there exists a closed infinite-dimensional subset $\Omega$ of the tangent bundle $\mathcal{T}\ltord$ that is invariant under the Euler-Lagrange flow \eqref{eq: InfDynamics}.   
\end{theorem}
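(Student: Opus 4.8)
The plan is to build $\Omega$ as the closure of the union of all calibrated trajectories emanating from differentiability points of $U$, lifted to the tangent bundle. Concretely, for every differentiability point $M$ of $U$, Theorem \ref{Thm: MainU} supplies a unique $C^1$ curve $\tx^*_M : [0,\infty) \to \ltord$ that is a minimizer of \eqref{Eq: MinProblemforU} on every $[0,T]$, satisfies the Euler--Lagrange equation, and has $\dot{\tx}^*_M(0) = -D_pH_c(M,\nabla U(M))$. I would first set
\begin{equation*}
\Omega_0 := \{ (\tx^*_M(t), \dot{\tx}^*_M(t)) \ : \ M \text{ a differentiability point of } U, \ t \geq 0 \} \subseteq \ttt\ltord,
\end{equation*}
and then define $\Omega := \overline{\Omega_0}$, the closure in the tangent bundle (with the relevant topology — presumably the strong topology on the base coordinate and a suitable topology on the fiber, to be fixed consistently with the setup of Section \ref{sec: thecellproblem}).

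The three things to check are: (a) $\Omega$ is nonempty and infinite-dimensional; (b) $\Omega_0$ is invariant under the Euler--Lagrange flow \eqref{eq: InfDynamics}; (c) invariance passes to the closure. For (a): $U$ is a viscosity solution, hence (being a limit of the Lipschitz functions $U_\eps$, or by the regularity built into the assumptions) differentiable on a dense set, and the construction places one full trajectory over each such point; a short argument — e.g. exhibiting an infinite family of differentiability points whose initial velocities span an infinite-dimensional subspace, or noting that a single non-constant calibrated curve already traces an infinite-dimensional set if the dynamics mixes enough coordinates — gives infinite-dimensionality. For (b): if $(N,\dot N) = (\tx^*_M(s), \dot{\tx}^*_M(s)) \in \Omega_0$, then the shifted curve $t \mapsto \tx^*_M(s+t)$ is again $C^1$, solves Euler--Lagrange, and is calibrated for $U$ on every interval (calibration is additive over concatenation and restriction); by the uniqueness clause of Theorem \ref{Thm: MainU} applied at the differentiability point $N$ — here one must first argue that $N$ is a differentiability point of $U$, which follows because $M$ was and calibrated curves propagate differentiability of $U$ forward in time, a standard Weak KAM fact — this shifted curve must coincide with $\tx^*_N$, so the flow orbit of $(N,\dot N)$ stays in $\Omega_0$. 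For (c): invariance of $\overline{\Omega_0}$ follows once the Euler--Lagrange flow is shown to be continuous (or at least closed) on the relevant domain, which should follow from the Tonelli assumptions $i)$--$viii)$ and the a priori compactness/energy bounds used already in the proof of Theorem \ref{TEO1}; one takes $(N_k,\dot N_k) \in \Omega_0$ converging to $(N,\dot N)$, flows each for time $t$, and uses continuous dependence plus the fact that the flowed points lie in $\Omega_0$.

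The main obstacle is step (c) together with the precise choice of topology on $\ttt\ltord$: because $\ltord$ is not locally compact, the Euler--Lagrange flow need not be continuous in the strong topology, and one may be forced to work with a weak or mixed topology, in which case one must verify that the flow is still well-defined and closed there and that $\Omega$ remains genuinely infinite-dimensional (closure in a weak topology could in principle collapse the set or make "infinite-dimensional" delicate to state). I expect this to be handled exactly as in the proof of Theorem \ref{TEO1}: the uniform action bounds along calibrated curves give equi-boundedness of $\|\dot{\tx}^*\|$ in $L^2$, hence compactness of the family of trajectories in $C([0,T];\ltord_{weak})$, and the limiting curve is again a calibrated Euler--Lagrange solution by lower semicontinuity of the action — so the closure consists only of points lying on calibrated curves, which is what invariance requires. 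A secondary, more bookkeeping-level point is confirming that calibrated curves propagate differentiability of $U$ (needed in step (b)); this is the infinite-dimensional analogue of the classical statement that along a calibrated curve $U$ is differentiable for $t>0$, and should go through using the strict convexity of $L_c$ in the velocity from the Tonelli assumptions.
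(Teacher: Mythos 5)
There is a genuine gap: your $\Omega=\overline{\Omega_0}$ is at best forward invariant. A point $(M,\dot{\tx}^*_M(0))$ over a differentiability point $M$ of $U$ has, in general, no backward orbit inside $\Omega_0$, and taking a closure does not create one. The invariance asserted in Theorem \ref{t1p3} is the two-sided one: it is exactly what feeds Corollaries \ref{cortwosidedmin} and \ref{minmeasures}, where calibrated curves defined on all of $\RR$ and invariant minimizing measures are extracted from $\Omega$. With only forward invariance the statement is nearly vacuous (all of $\mathcal{T}\ltord$ is a closed, infinite-dimensional invariant set), and none of the corollaries would follow from your construction. Your step (b) --- differentiability of $U$ propagates along minimizers, so the graph of $\nabla U$ is forward invariant --- is indeed part of the paper's argument, but it is the easy half; your step (a) is also essentially automatic once one has the right set, and your worry in (c) about weak topologies is beside the point, since the Hamiltonian vector field is uniformly Lipschitz and all stability arguments are run in the strong topology.

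What your proposal never confronts is the actual difficulty: producing points that remain over differentiability points of $U$ under the \emph{backward} flow for all times. The paper sets $\DD_t$ equal to the time-$t$ image of the graph $\DD=\{(M,\nabla U(M))\}$ under the Hamiltonian flow \eqref{Eq: HJsystemforU}, notes $\DD_t\subset\DD$ and that these sets are nested, and defines the invariant set from $\bigcap_{t>0}\DD_t$ (transported by the Legendre transform); the crux is that this intersection is non-empty, which cannot be obtained by a naive limiting argument because $\ltord$ is not locally compact. The route is: Lemma \ref{Lemma: InvofD_t} shows each $D_t$ is saturated under the equivalence $\sim$; Lemma \ref{Lemma: clsrDD_t} shows $\overline{D_t}=\bigcap_{s<t}D_s$, using the uniform semiconcavity estimate \eqref{Eq: SemConcatM_n}, continuity of $\nabla U$ on its differentiability set, and strong-topology stability of \eqref{Eq: HJsystemforU}; then the projections $A_t=\pi(\overline{D_t})$ are closed nested subsets of the compact quotient $\SS^d$, so $A_\infty=\bigcap_{t>0} A_t\neq\emptyset$ and $D_\infty=\pi^{-1}(A_\infty)\neq\emptyset$ (Lemma \ref{lma: Existenceof2sidedMins}). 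This transfer of compactness through $\pi$ is the heart of the proof and is entirely absent from your sketch; it also yields at once the closedness of $\Omega$ and, because $D_\infty$ is a union of full $\sim$-equivalence classes, its infinite-dimensionality, two points your proposal treats only vaguely.
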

As corollaries to this Theorem,  we get:
\begin{corollary}\label{cortwosidedmin}
	For any $c\in \RR ^d$, there exist $(U,c,L)$-calibrated curves $\tx \in AC^2(\RR, \ltord)$,  that is, \eqref{eq: twosidedmins} holds.
\end{corollary}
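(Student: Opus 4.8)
The plan is to obtain the two-sided calibrated curve as a limit of forward calibrated curves that have been "pushed back" in time, using the invariant set $\Omega$ from Theorem \ref{t1p3} to guarantee compactness of the relevant initial data. First I would recall that Theorem \ref{Thm: MainU} already provides, for each differentiability point $M$ of $U$, a forward $(U,c,L)$-calibrated curve $\tx_M \in C^1([0,\infty);\ltord)$ emanating from $M$, with $\dot{\tx}_M(0) = -D_pH_c(M,\nabla U(M))$; equivalently $(\tx_M(t),\dot{\tx}_M(t))$ stays in $\Omega$ for all $t \ge 0$. The idea is then to select, for each $n \in \mathbb{N}$, a differentiability point $M_n$ of $U$ and consider the calibrated curve $\tx_{M_n}$; by the Euler-Lagrange flow invariance of $\Omega$, the shifted curve $\ty_n(s) := \tx_{M_n}(s+n)$ is defined and calibrated on $[-n,\infty)$, with $(\ty_n(s),\dot{\ty}_n(s)) \in \Omega$ for all $s \ge -n$.

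The core of the argument is a diagonal compactness extraction. On any fixed interval $[-N,N]$ (for $n \ge N$), the curves $\ty_n$ satisfy a uniform $AC^2$ bound: the calibration identity \eqref{Eq: MinProblemforU} combined with the coercivity/superlinearity of $L_c$ (assumptions $i)$--$viii)$, in particular the Tonelli conditions) forces $\int_{-N}^{N}\|\dot{\ty}_n(s)\|^2\,ds$ to be bounded independently of $n$, and boundedness of $\ty_n$ in $\ltord$ follows because the particles live on the torus $\Tt^d$ (configurations are bounded modulo the $\ZZ^d$-periodicity that is built into the setup). Hence $\{\ty_n\}$ is precompact for uniform-on-compacts weak convergence in $\ltord$; passing to a diagonal subsequence yields a limit curve $\tx \in AC^2_{loc}(\RR;\ltord)$. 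I then pass to the limit in the calibration identity on each $[a,b] \subset \RR$: lower semicontinuity of the action gives $\le$, and the reverse inequality comes from the fact that $U(\tx(a)) \le \int_a^b L_c(\tx,\dot\tx) + \bar H(c)\,ds + U(\tx(b))$ always holds (it is the dynamic-programming inequality satisfied by any viscosity solution of the cell problem), so equality holds and $\tx$ is $(U,c,L)$-calibrated on all of $\RR$, i.e. \eqref{eq: twosidedmins} holds.

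The main obstacle is the passage to the limit in the action functional in the absence of local compactness of $\ltord$: weak convergence of $\ty_n \to \tx$ in $L^2$ does not by itself control the nonlinear Lagrangian term, and one must exploit convexity of $L_c$ in the velocity variable (for weak lower semicontinuity of $\int \|\dot\tx\|^2$) together with the specific structure of the potential term — here the compact embedding afforded by the torus-valued, periodic structure, or equivalently the continuity of the interaction functional $M \mapsto \int_{I\times I} W(M(z)-M(\bar z))$ under the convergence at hand — to ensure the potential part passes to the limit. A secondary technical point is verifying that $\tx(s)$ remains a differentiability point of $U$ for the curve to be genuinely calibrated in the pointwise sense, or alternatively phrasing calibration purely through the integral identity \eqref{eq: twosidedmins}, which sidesteps that issue; I would take the latter route. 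Once the limiting integral identity is established on every bounded subinterval of $\RR$, standard arguments upgrade $\tx$ to a solution of the Euler-Lagrange equation on $\RR$, completing the proof.
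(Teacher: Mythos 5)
Your overall strategy --- push forward calibrated curves back in time and extract a limit --- is the classical finite-dimensional (Fathi-type) argument, but in this setting it has two genuine gaps. First, the claim that a forward calibrated curve from an arbitrary differentiability point $M$ satisfies $(\tx_M(t),\dot{\tx}_M(t))\in\Omega$ for all $t\ge 0$ is not justified: at time $t$ the orbit lies in (the Legendre image of) $\DD_t$, and since $\DD_t\subset\DD_s$ only for $s\le t$, membership in $\Omega$ would require being in $\DD_s$ for \emph{every} $s>0$, which is exactly the nontrivial point; indeed, the whole content of Lemma \ref{lma: Existenceof2sidedMins} is that $\bigcap_{t>0}\DD_t$ is nonempty, and this is proved not along individual orbits but by projecting the nested, closed, $\sim$-saturated sets $\overline{D_t}$ to compact sets $A_t\subset\SS^d$. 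Moreover $\Omega$ is only \emph{closed} in $\mathcal{T}\ltord$, not compact, so invoking it cannot ``guarantee compactness of the relevant initial data''. Second, the compactness and limit passage you propose do not survive the lack of local compactness of $\ltord$: bounded sets are only weakly sequentially precompact, and there is no ``compact embedding afforded by the torus-valued structure'' at the level of random variables --- compactness holds only in the quotient $\SS^d\cong\Pp(\Tt^d)$. Under merely weak convergence of $\ty_n(s)$, neither $U$ (which is strongly Lipschitz and factors through $\SS^d$, but the projection $\pi$ is not weakly continuous --- think of oscillating configurations converging weakly to a constant while their laws do not converge) nor the action integral (whose dependence on the state variable is only strongly continuous; the paper allows general Lagrangians satisfying i)--viii), with no interaction-potential structure to exploit) passes to the limit, so the calibration identity cannot be recovered for the limit curve as described.

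The paper's proof avoids any limit of curves in $\ltord$. It shows $D_\infty=\bigcap_{t>0}D_t\neq\emptyset$ via the nested compact sets $A_t=\pi(\overline{D_t})$ in $\SS^d$ (using Lemma \ref{Lemma: InvofD_t} to know that $\overline{D_t}$ is a union of full equivalence classes, and Lemma \ref{Lemma: clsrDD_t} for closedness of the intersection description), and then, for $M\in D_\infty$, uses the globally defined Hamiltonian flow through $(M,\nabla U(M))$: since $(M,\nabla U(M))\in\DD_t$ for every $t>0$, uniqueness of solutions of \eqref{Eq: HJsystemforU} shows the backward orbit of any length coincides with a minimizing (calibrated) trajectory, so the full orbit is calibrated on all of $\RR$. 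All compactness is used only in $\SS^d$ and only to prove nonemptiness of the intersection; the passage of information along trajectories is done through strong continuity of $\nabla U$ (Asplund) and ODE stability, not through weak limits of curves. If you want to salvage your route, you would effectively have to carry out the diagonal extraction in $\SS^d$, at which point you lose the linear structure needed for velocities and calibration and are forced back to machinery of the type the paper develops.
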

\begin{corollary}\label{minmeasures}
	For any $c \in \RR^d$, we have
	\begin{equation*}\label{inf-H(c)}
	-\bar{H}(c)=\inf \limits_{\mu}\int\limits_{\Omega}L_cd\mu,
	\end{equation*}
	where the infimum is taken over all invariant probability measures $\mu$ on $(\Omega, \mathfrak{B})$. Moreover, the infimum is achieved.
\end{corollary}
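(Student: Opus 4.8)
The plan is to prove separately that $\int_\Omega L_c\,d\mu\ge-\bar{H}(c)$ for every $\phi_t$-invariant probability measure $\mu$ on $(\Omega,\mathfrak B)$ and that this lower bound is attained, so that the infimum equals $-\bar{H}(c)$ and is achieved. Here $\phi_t$ denotes the Euler--Lagrange flow of \eqref{eq: InfDynamics}, which by Theorem~\ref{t1p3} maps $\Omega$ into itself for all $t\in\RR$, and for $(M,V)\in\Omega$ I write $t\mapsto(\tx_{(M,V)}(t),\dot{\tx}_{(M,V)}(t)):=\phi_t(M,V)$ for the corresponding orbit. For the lower bound, fix such a $\mu$. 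Since $U$ is a fixed point of the Lax--Oleinik semigroup, the dynamic programming identity \eqref{Eq: MinProblemforU} applied along the admissible curve $s\mapsto\tx_{(M,V)}(s)$ gives, for every $(M,V)\in\Omega$ and $\tau>0$,
\[
\int_0^\tau L_c\big(\phi_s(M,V)\big)\,ds\ \ge\ U(M)-U\big(\tx_{(M,V)}(\tau)\big)-\bar{H}(c)\,\tau .
\]
Integrating in $(M,V)$ against $\mu$ and using Tonelli's theorem (legitimate because $L_c$ is bounded below) together with the invariance $(\phi_s)_\#\mu=\mu$ on the left, and the invariance of $\mu$ once more together with the boundedness of $U$ (which makes $U\circ\pi$ integrable, so the two $U$-terms cancel) on the right, one obtains $\tau\int_\Omega L_c\,d\mu\ge-\bar{H}(c)\,\tau$, i.e. $\int_\Omega L_c\,d\mu\ge-\bar{H}(c)$.

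For the attainment, take a two-sided $(U,c,L)$-calibrated curve $\tx^*\in AC^2(\RR,\ltord)$ supplied by Corollary~\ref{cortwosidedmin} and lift it to $t\mapsto(\tx^*(t),\dot{\tx}^*(t))$, an orbit of $\phi_t$ which, by the construction of $\Omega$ in the proof of Theorem~\ref{t1p3}, lies in $\Omega$. Put $\mu_T:=\tfrac{1}{T}\int_0^T\delta_{(\tx^*(s),\dot{\tx}^*(s))}\,ds$. The calibration identity \eqref{eq: twosidedmins} gives $\int L_c\,d\mu_T=\tfrac{1}{T}\big(U(\tx^*(0))-U(\tx^*(T))\big)-\bar{H}(c)\to-\bar{H}(c)$ as $T\to\infty$, since $U$ is bounded. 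Energy conservation along Euler--Lagrange orbits and the superlinearity of $L$ in the velocity (assumptions $i)$--$viii)$) yield $\sup_t\|\dot{\tx}^*(t)\|<\infty$, while the positions $\tx^*(t)$ remain in a bounded set because the configuration space is the finite-diameter infinite-dimensional torus; hence all $\mu_T$ are supported in one fixed weakly compact metrizable subset of $\ttt\ltord$, and along a sequence $T_n\to\infty$ one extracts a weak-$*$ limit $\mu^*$. As in the finite-dimensional theory, $\mu^*$ is a probability measure supported on $\Omega$ (each $\mu_{T_n}$ is, and $\Omega$ is closed) and invariant under $\phi_t$ (since $(\phi_h)_\#\mu_T$ differs from $\mu_T$ only by boundary terms of order $1/T$). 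Finally, lower semicontinuity of $L_c$ together with its lower bound gives $\int_\Omega L_c\,d\mu^*\le\liminf_n\int L_c\,d\mu_{T_n}=-\bar{H}(c)$; combined with the lower bound already proved, applied to $\mu^*$, this forces $\int_\Omega L_c\,d\mu^*=-\bar{H}(c)$, which yields the identity and the attainment.

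The routine parts are the Fenchel/domination inequality and the Fubini--invariance bookkeeping of the first paragraph; the genuine difficulties are all infinite-dimensional and live in the second. One must actually produce the a priori velocity bound and the resulting weak-$*$ precompactness of $\{\mu_T\}$ on the non-locally-compact space $\ttt\ltord$ — this is where conservation of energy and the Tonelli structure of $L$ from Subsection~\ref{assumptions} enter — and, more delicately, one needs $\Omega$ to be closed and $L_c$ to be lower semicontinuous for the topology underlying $\mathfrak B$ in which $\mu^*$ is taken. For the kinetic term the latter is just weak lower semicontinuity of the norm, but the dependence of $L$ on the configuration $M$ is nonlinear (e.g.\ the interaction term $\int_{I\times I}W(M(z)-M(\bar z))\,d\lambda_0(z)\,d\lambda_0(\bar z)$), so one cannot pass to weak limits naively and must invoke the precise continuity hypotheses on $L$ together with the fine structure of $\Omega$ built in the proof of Theorem~\ref{t1p3}. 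I expect this last point to be the main obstacle.
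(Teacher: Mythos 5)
Your overall strategy --- the lower bound for every invariant measure via the calibration/fixed-point inequality, and attainment via time averages along a calibrated orbit --- is the same as the paper's, and your first paragraph is essentially the argument the paper imports from Fathi. The genuine gap is in the extraction of the limit measure, and it is exactly the infinite-dimensional difficulty you flag but do not resolve. First, the family $\mu_T$ need not be supported in a fixed bounded subset of $\mathcal{T}\ltord$: a two-sided calibrated curve lives in $\ltord$, not in the quotient, and may drift by elements of $\ltordz$ (winding around the torus), so its positions are bounded only modulo $\ltordz$ and $\GG$, i.e.\ in $\SS^d$; the ``finite diameter'' of the torus does not bound $\|\tx^*(t)\|$, so the claimed fixed weakly compact set does not exist. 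Second, even granting weak-$*$ precompactness, the passage to the limit fails as written: $\Omega$ is closed for the norm topology (Theorem \ref{t1p3}) but is not shown to be weakly closed, the push-forward invariance argument needs continuity of the flow for the topology in use, and, most seriously, $L_c$ is not weakly lower semicontinuous in the position variable --- for the model mechanical Lagrangian the interaction term $\int_{I\times I}W(M(z)-M(\bar z))\,d\lambda_0\,d\lambda_0$ is not weakly continuous along oscillating sequences, and assumptions iv)--viii) provide only norm continuity. Finally, the limit you would obtain is a Borel measure for the weak topology, not a priori a measure on $(\Omega,\mathfrak{B})$ with $\mathfrak{B}$ as in \eqref{Bsigmalgebra}, which is what the statement asserts.

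The paper's proof is built precisely to bypass all of this. Since $\Omega$ is the graph of $M\mapsto -D_pH_c(M,\nabla U(M))$ over $D_\infty=\pi^{-1}(A_\infty)$, measures on $(\Omega,\mathfrak{B})$ correspond, via \eqref{mu_nu}, to Borel measures on the compact metric space $A_\infty\subset\SS^d$; Lemma \ref{Riezs} gives a Riesz representation for the time-average functionals $F\mapsto \frac{1}{t}\int_0^t F(\Phi(s;M_0,N_0))\,ds$ and narrow compactness at that level, with convergence tested only against continuous rearrangement-invariant $F$. Every function needed --- $U$, and $L_c$ restricted to $\Omega$, where the velocity is the continuous function $-D_pH_c(\cdot,\nabla U(\cdot))$ of the position --- is rearrangement invariant and norm continuous, hence descends to a continuous function on the compact $A_\infty$; so the averages converge along a subsequence, invariance follows from the shift-by-$h$ computation, and $\int_\Omega L_c\,d\mu_\infty$ is evaluated exactly (not merely estimated from above) by the calibration identity along the forward orbit of a point $M_0\in D_\infty$. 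No a priori velocity bound, no weak closedness of $\Omega$, and no weak lower semicontinuity of $L_c$ are required. To repair your proof, replace the weak-$*$ extraction on $\mathcal{T}\ltord$ by this projection to $A_\infty$ (i.e.\ invoke Lemma \ref{Riezs}) and compute the limit of $\int L_c\,d\mu_{T_n}$ from \eqref{eq: twosidedmins} rather than by semicontinuity.
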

\begin{remark}
	$\mathfrak{B}$ is the $\sigma$-algebra of subsets of $\Omega$ that are closed under measure-preserving transformations and integer translations. See \eqref{Bsigmalgebra} for the definition.
\end{remark}

\noindent {\bf Thanks:} We thank Wilfrid Gangbo for his valuable comments on this manuscript. 

\section{Preliminaries and main assumptions}

Here, we present background material on mechanical systems with finite
 or
infinite number of identical particles. 

\subsection{Mechanical systems with a finite number of indistinguishable particles}

Consider a system of $n$ identical particles on the torus $\Tt ^d$. Let $l: (\RR ^d)^n \times (\RR ^d)^n \to \RR$ be the corresponding Lagrangian. Denote by $x_i(t) \in \RR$ the position of the particle $"i"$ at time $t$. Let $(x_i^0,v_i^0)$ be the corresponding initial position and velocity. From \emph{Hamilton's minimal action principle}, the system evolves according to the Euler-Lagrange equation:  
\begin{equation}\label{eq: FinDynamics}
\begin{cases}
\frac{d}{dt} D_v l(x(t),\dot{x}(t))= D_x l(x(t),\dot{x}(t)),\\
x_i(0)=x_i^0,\ \dot{x}_i(0)=v_i^0,\ i=1,2,\ldots,n,
\end{cases}
\end{equation}
where $x(t)=(x_1(t),x_2(t), \cdots, x_n(t)) \in (\RR ^d)^n$. Let $h:(\RR ^d)^n\times(\RR ^d)^n \to \RR$ be the Hamiltonian given by the Legendre transform
\begin{equation*}
  h(x,p)=\sup \limits _{v \in (\RR ^d)^n} \{-p \cdot v- l(x,v)\}.
\end{equation*}
Then, \eqref{eq: FinDynamics} has the equivalent formulation in Hamiltonian form
\begin{equation}\label{eq: FinHmltForm}
\begin{cases}
\dot{x}(t) = -D_p h(x(t),p(t)),\\
\dot{p}(t) = D_x h(x(t),p(t)),\\
x_i(0)=x_i^0, \xd_i(0)=p_i^0,\quad i=1,2,\ldots,n,
\end{cases}
\end{equation}
where $p(t)=(p_1(t),p_2(t),\cdots p_n(t)) \in (\RR ^d)^n$ is referred to as the momentum.
%The system \eqref{eq: FinHmltForm} is called the Hamiltonian system.

A critical issue in classical mechanics is the study of qualitative properties of  \eqref{eq: FinDynamics} and \eqref{eq: FinHmltForm}. Since the particles move on the torus $\Tt ^d$, we assume that $l$ (and, consequently, $h$) are periodic in the position variable, $x$. Because the particles are identical, $l$ and $h$ are invariant under permutations, that is, 
for all points $(x_i,v_i) \in \RR ^d \times \RR ^d, \ i=1,2,\cdots,n$ and all permutations $\sigma \in S_n$, 
\begin{align*}
	& l(x_1,x_2,\cdots , x_n, v_1, v_2, \cdots v_n)=l(x_{\sigma(1)},x_{\sigma(2)},\cdots , x_{\sigma(n)}, v_{\sigma(1)}, v_{\sigma(2)}, \cdots v_{\sigma(n)})\\
	& h(x_1,x_2,\cdots , x_n, v_1, v_2, \cdots v_n)=h(x_{\sigma(1)},x_{\sigma(2)},\cdots , x_{\sigma(n)}, v_{\sigma(1)}, v_{\sigma(2)}, \cdots v_{\sigma(n)}).
\end{align*}
%$$h(x_1,x_2,\cdots , x_n, v_1, v_2, \cdots %v_n)=h(x_{\sigma(1)},x_{\sigma(2)},\cdots , x_{\sigma(n)}, v_{\sigma(1)}, %v_{\sigma(2)}, \cdots v_{\sigma(n)}),$$
%.
Then, \eqref{eq: FinDynamics} and \eqref{eq: FinHmltForm} can be viewed as dynamical systems on $(\Tt ^d)^n / S_n$.

Graphs of closed one-forms that lie in the level sets of the Hamiltonian are invariant under the flow \eqref{eq: FinHmltForm}. Since closed forms on $(\Tt ^d)^n / S_n$ are given by $\omega _x(p)=  \sum \limits _{i=1}^{n}c \cdot p_i + \langle D u(x), p \rangle$, for some $c \in \RR ^d$ and $u : (\RR ^d )^n \to \RR$ periodic, we are led to the equation
\begin{equation}\label{eq: cellproblem}
 h(x, D u(x)+\tilde{c})= \lambda,
\end{equation}
where $\lambda \in \RR$ is a constant, and $\tilde{c}=(c,c,\cdots,c) \in (\RR ^d)^n$. Equation \eqref{eq: cellproblem} is the cell problem associated with \eqref{eq: FinHmltForm}.

Qualitative properties of \eqref{eq: FinDynamics} and \eqref{eq: FinHmltForm} are closely linked to the regularity properties of the solutions to \eqref{eq: cellproblem}.

\subsection{The random variable approach}

A standard method for studying mechanical systems with an infinite
number of identical particles is to look at a probability measure encoding the positions of the particles. The evolution of a system is a curve in a space of probability measures. For problems with a finite
number of particles, this measure is the empirical measure of the particles' positions.
If the ambient space is compact, the space of probability measures on it is also compact. Compactness is particularly relevant for a variational theory such as the Weak KAM theory. On the other hand, the lack of a linear structure makes it more complex to introduce notions such as the derivative of a path.    

An alternative approach consists of
regarding the state of the system as a random variable. Each realization of this random variable
represents the position 
of one particle. 
The evolution of the system is given by a trajectory in a space of random variables.
The space of random variables is a vector space and has a natural Riemannian structure. Unfortunately, in contrast to the space of probability measures, 
non-trivial spaces of random variables are not locally compact.
However, for symmetrical problems, random variables that have the same law represent an equivalent state of the system. Thus, the dynamics can be viewed as an evolution in the quotient space of random variables with respect to the equivalence relation of having the same law. This latter space is compact and isometric to the space of probability measures \cite{tudgangbo'12}. Hence, we can use compactness arguments.

In \cite{tudgangbo'10a, tudgangbo'10b} the random variables approach is used by working in $L^2([0,1], \RR)$. Let $\mathcal{P}_2(\RR)$ be the space of probability measures over $\RR$ with finite second-order moment endowed with the 2-Wasserstein distance $W_2$. Then, $\mathcal{P}_2(\RR)$ is isometric to the set of monotone non-decreasing functions in $L^2([0,1],\RR)$. The lack of compactness of $L^2([0,1], \RR)$ is offset by the compactness of the set of monotone functions via Helly's selection theorem.

%In particular, to prove an infinite-dimensional Weak KAM theorem, they %consider a preliminary discounted cost infinite horizon problem. Then, they %prove that, for every initial monotone function, there exists a minimizer %starting from that point. Moreover, by using similar ideas, they prove that %the corresponding minimum in the Lax-Oleinik semigroup is attained at %minimizing curves (c-calibrated curves) starting at monotone functions. 

If $d >1$, the random variables approach leads to a dynamical system on $\ltord$, where $I=[0,1]^d$. Unfortunately, unlike in dimension $1$, there is no canonical isometry between $\mathcal{P}_2(\RR ^d)$ and some subset of $\ltord$. Hence, the methods used in \cite{tudgangbo'10a, tudgangbo'10b} cannot be applied if $d > 1$. In particular, the existence of minimizing curves for both the discounted infinite horizon problem and the Lax-Oleinik semigroup were open until now. In this paper, we prove the existence of minimizers in the general case $d \geq 1$. In \cite{tudgangbo'12}, the authors use an alternative approach and work directly in $\mathcal{P}(\Tt ^d)$. 
%
%In the current paper,  we choose the random variables approach and work in the space %$\ltord$. Since the aforementioned isometry in the higher-dimensional case is not valid, we %use a different approach.
Our techniques are more functional analytic in spirit, and use results from the calculus of variations in Hilbert spaces \cite{gomesnurbekyan'12a}.

\subsection{Mechanical systems with an infinite number of indistinguishable particles via random variables}

Consider a mechanical system with an infinite number of identical particles. Assume that there is a one-to-one correspondence between  particles and points in $I=[0,1]^d$. 
We encode the positions of the particles in a 
random variable $M \in \ltord$. Using the notation of \cite{tudgangbo'10a, tudgangbo'10b},
for each point $z \in I$, $Mz \in \RR ^d$ is the position of the particle $"z"$ in the space.

Let $L: \ltord \times \ltord \to \RR$ be the Lagrangian of the system. The
associated dynamics is given by the Euler-Lagrange equation
\begin{equation}\label{eq: InfDynamics}
\begin{cases}
\frac{d}{dt} D_v L (\tx(t), \dot{\tx}(t)) = D_x L(\tx(t), \dot{\tx}(t)),\\
\tx(0)=M, \dot{\tx}(0)=N,
\end{cases}
\end{equation}
where the partial derivatives are in Fr\'{e}chet sense, and $(M,N) \in \ltord \times \ltord$ is the initial configuration of positions and velocities.
%
%{\bf THERE IS SOMETHING WEIRD WITH THE DYNAMICS - WHY DO WE NEED THE "z"??}
For $H$ as in \eqref{eq: HamiltonianDef}, 
the infinite-dimensional Hamiltonian system is then
\begin{equation}\label{eq: InfHmltForm}
\begin{cases}
\dot{\tx}(t) = -D_p H(\tx(t),\tp(t)),\\
\dot{\tp}(t) = D_x H(\tx(t),\tp(t),\\
\tx(0)=M, \ \ \tp(0)=P.
\end{cases}
\end{equation}

As in the finite-dimensional case, we need the notions of ``periodicity" and ``invariance under permutations" for the Lagrangian and Hamiltonian. Consider the subset of $L^2(I; \RR ^d)$
$$\ltordz:=\{M \in L^2(I; \RR ^d) \  ;\  Mz \in \ZZ ^d,\, \  \lambda _0 \  a.e. \}.$$
This set is a subgroup with respect to addition. A function $F$ defined on $L^2(I; \RR ^d)$ is called periodic if $F(M+Z)=F(M)$ for all $M \in L^2(I; \RR ^d)$ and $Z \in \ltordz$. Periodicity of the Lagrangian $L$ in the spatial variable means that
$$L(M+Z,N)=L(M,N),$$
for all $M,N \in \ltord$ and $Z \in \ltordz$. The \emph{$d$-infinite-dimensional torus} is the quotient space
\begin{equation*}
  \TT ^d : = \ltord / \ltordz.
\end{equation*}
	Let $(X,\mathfrak{F})$ and $(Y,\mathfrak{G})$ be measurable spaces, and $M : (X, \mathfrak{F}) \to (Y,\mathfrak{G})$ a measurable map. 
	Suppose $\mu$ is a measure on $(X,\mathfrak{F})$.
	The push-forward of the measure $\mu$ through the map $M$ is the measure
	 $\nu=M \sharp \mu$ on $Y$ given by $\nu [C] = \mu[M^{-1}(C)]$ for all sets $C \in \mathfrak{G}$. 

Consider the set $\mathcal{G}$ of all bijective functions $G:I \to I$ such that $G$ and $G^{-1}$ are Borel measurable and that push-forward the Lebesgue measure $\lambda _0$ to itself. Then $\mathcal{G}$, equipped with the composition operation, is a non-commutative group that plays the role of $S_n$ in the infinite-dimensional setting. Hence, invariance under permutations of the Lagrangian $L$ in the infinite-dimensional setting is the invariance under the action of $\mathcal{G}$:
\begin{equation*}L(M \circ G, N \circ G)=L(M,N),
\end{equation*}
for all $M,N \in L^2(I; \RR ^d)$ and $G \in \mathcal{G}$. We call this property \emph{rearrangement invariance}. If $L$ is periodic and rearrangement invariant, the Euler-Lagrange equation \eqref{eq: InfDynamics} is a dynamical system on the \emph{$d$-infinite-dimensional symmetrical torus $\TT ^d / \GG$}.

A thorough analysis of the symmetrical torus $\TT^d / \GG$ can be found in \cite{tudgangbo'10a} ($d=1$) and in \cite{tudgangbo'12} ($d>1$). Here, we recall several important facts that we require for our analysis.

We endow $\TT^d / \GG$ with the induced metric $\text{dist}_{weak}$ defined as
\begin{equation*}\label{def: distweakont^d/g}
\text{dist}_{weak}(M_1,M_2) = \inf_{G \in \mathcal{G}, Z \in \ltordz}{\|M_1- M_2 \circ G-Z\|}.
\end{equation*}
This distance satisfies all the axioms of a metric distance except the non-degeneracy, that is,
 there exist $M_1,M_2 \in \ltord$ such that $\text{dist}_{weak}(M_1,M_2)=0$ but $M_1\neq M_2$.

We define an equivalence relation as follows: $M_1,M_2 \in \ltord$ are equivalent, denoted by $M_1 \sim M_2$, if
\begin{equation*}\label{eqrel}
	\dw(M_1,M_2)=0.
\end{equation*}
It is straightforward to see that $\sim$ is an equivalence relation. Define $\SS^d$ as
\begin{equation*}
	\SS^d=\left(\TT^d /\GG\right) /\sim. 
\end{equation*}
$\SS^d$ is a metric space with the induced distance
\begin{equation*}
	\text{dist}_{\SS}(M_1,M_2)=\dw(M_1,M_2).
\end{equation*}
\begin{remark}
	By the abuse of notation, we denote by $M$ all equivalence classes of $M$.
\end{remark}

\begin{proposition}[\cite{tudgangbo'10a,tudgangbo'12}]\label{Prp: GInvariant}
	The space $(\SS ^d,\text{dist}_{\SS})$ is isometric to  $(\Pp(\Tt^d),W_2)$, where $W_2$ is the 2-Wasserstein distance. Consequently, $\SS ^d$ is a compact, complete, separable metric space.
	
	Furthermore, for any continuous periodic function, $F: \ltord \rightarrow \RR$, the following assertions are equivalent:
	\begin{enumerate}
		\item[i)]
		$F$ is rearrangement invariant 
		\item[ii)]
		$F(M_1)=F(M_2)$ for all $M_1,M_2 \in L^2(I; \RR ^d)$ such that $M_1 \sim M_2$.
	\end{enumerate}
\end{proposition}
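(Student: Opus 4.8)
The plan is to make the isometry of the first assertion explicit through the ``law'' map and then to read off the topological statements and the equivalence i)$\Leftrightarrow$ii) from it. Write $\pi:\rd\to\Tt^d$ for the canonical projection and, for $M\in\ltord$, set $\Phi(M):=(\pi\circ M)_\sharp\lambda_0\in\Pp(\Tt^d)$, the law of the $\Tt^d$-valued random variable $\pi\circ M$ on $(I,\lambda_0)$; equip $\Pp(\Tt^d)$ with $W_2$ built from the flat metric $d_{\Tt^d}(x,y)=\min_{k\in\ZZ^d}|x-y-k|$. First I would check that $\Phi$ depends only on the class of $M$ in $\SS^d$: since $G_\sharp\lambda_0=\lambda_0$ for $G\in\GG$ we get $\Phi(M\circ G)=\Phi(M)$; since $\pi\circ(M+Z)=\pi\circ M$ a.e. for $Z\in\ltordz$ we get $\Phi(M+Z)=\Phi(M)$; and since $W_2(\Phi(A),\Phi(B))^2\le\int_I d_{\Tt^d}(\pi Az,\pi Bz)^2\,d\lambda_0\le\|A-B\|^2$, the map $\Phi$ is $L^2$-continuous, so $\dw(M_1,M_2)=0$ --- along which $\|M_1-M_2\circ G_n-Z_n\|\to0$ for some $G_n\in\GG$, $Z_n\in\ltordz$, with $\Phi(M_2\circ G_n+Z_n)=\Phi(M_2)$ --- forces $\Phi(M_1)=\Phi(M_2)$. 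Hence $\Phi$ descends to a map $\bar\Phi:\SS^d\to\Pp(\Tt^d)$.

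The core is the identity $\dw(M_1,M_2)=W_2(\Phi(M_1),\Phi(M_2))$. The inequality ``$\ge$'' is easy: for any $G\in\GG$, $Z\in\ltordz$ the push-forward $(\pi M_1,\pi(M_2\circ G))_\sharp\lambda_0$ is a transport plan between $\Phi(M_1)$ and $\Phi(M_2)$, and $|M_1z-M_2(Gz)-Zz|\ge d_{\Tt^d}(\pi M_1z,\pi M_2(Gz))$ pointwise, so $\|M_1-M_2\circ G-Z\|^2\ge W_2(\Phi(M_1),\Phi(M_2))^2$; take the infimum over $(G,Z)$. The reverse inequality is the step I expect to be \emph{the main obstacle}: one must approximate, in quadratic cost, an optimal plan $\gamma\in\Pi(\Phi(M_1),\Phi(M_2))$ by couplings of the restricted form $(\pi M_1,\pi(M_2\circ G_n))_\sharp\lambda_0$ with $G_n\in\GG$. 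This is where it is essential that $(I,\lambda_0)$ is a standard atomless probability space: covering $\Tt^d$ by finitely many Borel pieces of diameter $<\eps$, replacing $\gamma$ by the corresponding finite ``block'' coupling, and realizing the blocks by gluing measure-preserving Borel bijections between equal-measure Borel subsets of $I$ (which exist precisely because $\lambda_0$ has no atoms) yields such $G_n$ with $\int_I d_{\Tt^d}(\pi M_1z,\pi M_2(G_nz))^2\,d\lambda_0\to W_2(\Phi(M_1),\Phi(M_2))^2$ as $\eps\to0$. A measurable selection of the nearest integer translate then produces $Z_n\in\ltordz$ with $|M_1z-M_2(G_nz)-Z_nz|=d_{\Tt^d}(\pi M_1z,\pi M_2(G_nz))$ a.e., so $\dw(M_1,M_2)\le\lim_n\|M_1-M_2\circ G_n-Z_n\|=W_2(\Phi(M_1),\Phi(M_2))$. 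In particular $\bar\Phi$ is an isometric, hence injective, embedding.

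For surjectivity, given $\mu\in\Pp(\Tt^d)$ I would lift $\mu$ to a Borel probability measure $\tilde\mu$ on the half-open cube $[0,1)^d$ (automatically of finite second moment, being bounded) and again use that $(I,\lambda_0)$ is standard atomless to obtain a measurable $M:I\to[0,1)^d$ with $M_\sharp\lambda_0=\tilde\mu$; then $M\in\ltord$ and $\bar\Phi(M)=\mu$. Thus $\bar\Phi$ is an onto isometry, which gives the first assertion, and compactness, completeness and separability of $\SS^d$ then transfer from $(\Pp(\Tt^d),W_2)$, valid since $\Tt^d$ is compact. Finally, for i)$\Leftrightarrow$ii): ii)$\Rightarrow$i) is immediate because $\dw(M,M\circ G)\le\|M-(M\circ G)\circ G^{-1}\|=0$, so $M\sim M\circ G$ and ii) gives $F(M\circ G)=F(M)$; conversely, if i) holds and $M_1\sim M_2$, choose $G_n\in\GG$, $Z_n\in\ltordz$ with $\|M_1-M_2\circ G_n-Z_n\|\to0$, observe that $F(M_2\circ G_n+Z_n)=F(M_2)$ by periodicity and rearrangement invariance, and let $n\to\infty$ using continuity of $F$ to conclude $F(M_1)=F(M_2)$.
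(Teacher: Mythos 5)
Your proposal is correct in substance, but note that the paper itself does not prove this proposition: it is quoted from \cite{tudgangbo'10a,tudgangbo'12}, so what you have written is essentially a self-contained reconstruction of the Gangbo--Tudorascu argument rather than an alternative to anything in this text. Your route is the standard one: the law map $\Phi(M)=(\pi\circ M)_\sharp\lambda_0$, the easy inequality $\dw\geq W_2$ via the induced coupling, and the hard inequality $\dw\leq W_2$ by approximating an optimal plan with couplings of the form $(\pi M_1,\pi(M_2\circ G))_\sharp\lambda_0$. You correctly identify the crux, and your block-discretization sketch is the right mechanism; the only places needing care are exactly the ones you flag plus two small points: (a) realizing the block coupling requires genuine elements of $\GG$, i.e.\ Borel bijections $G:I\to I$ with $G,G^{-1}$ pushing $\lambda_0$ to itself, which the isomorphism theorem for standard atomless probability spaces gives only up to null sets, so one must patch the exceptional null sets (harmless, since all quantities are $L^2$/a.e.); (b) the nearest-integer selection $Z_n$ must be checked to lie in $\ltordz$, which follows from $|Z_nz|\leq |M_1z|+|M_2(G_nz)|+C$ and $\|M_2\circ G_n\|=\|M_2\|$. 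Injectivity of the induced map on $\SS^d$ is automatic once $\dw=W_2\circ(\Phi\times\Phi)$, since $\SS^d$ is by definition the quotient by $\dw=0$; surjectivity via a Skorokhod-type representation of any $\mu\in\Pp(\Tt^d)$ as a push-forward of $\lambda_0$, and the transfer of compactness, completeness and separability from $(\Pp(\Tt^d),W_2)$, are fine, as is your two-line proof of the equivalence i)$\Leftrightarrow$ii), where continuity of $F$ is used only in the direction i)$\Rightarrow$ii).
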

Finally, we set
\begin{equation}\label{proj}
\pi:\ltord \to \SS^d
\end{equation}
to be the natural projection that maps a function $M$ to its equivalence class. Note that $\pi$ is 1-Lipschitz.

\subsection{Main assumptions}\label{assumptions}

Here, we suppose that $L : \ltord \times \ltord \to \RR$ satisfies the following conditions, for some constants $C,K_L,\gamma >0$ and for all $M,N,H_1,H_2 \in \ltord, \ G \in \mathcal{G}, \ Z \in \ltordz$, 
\begin{itemize}
	\item[i)] $L(M+Z,N)=L(M,N)$ (periodicity);
	\item[ii)] $L(M \circ G, N \circ G) = L(M,N)$ (rearrangement invariance);
	\item[iii)] $L \geq 0$;
	\item[iv)] $L$ is $C^1$ Fr\'{e}chet differentiable and $L,DL$ are locally uniformly continuous, where $DL$ is the full derivative of $L$ in Fr\'{e}chet sense;
	\item[v)] $L(M,N) \leq C(1+\|M\|^2+\|N\|^2), \ |L(M,0)| \leq C$;
	\item[vi)] $\|DL\|\leq C+C L$;
	\item[vii)] $L(M+H_1,N+H_2)-L(M,N)-\langle D_{x}L(M,N), H_1\rangle-\langle D_{v}L(M,N), H_2\rangle \geq \gamma \|H_2\|^2-K_L \|H_1\|^2$;
	\item[viii)] $L(M+H_1,N+H_2)-L(M,N)-\langle D_{x}L(M,N), H_1\rangle-\langle D_{v}L(M,N), H_2\rangle \leq K_L \|H_2\|^2+K_L \|H_1\|^2$.
\end{itemize}
For $c \in \RR ^d$, let $L_c$ be as in \eqref{eqLC}.
The Hamiltonian, $H$, associated with the Lagrangian, $L$, is given by \eqref{eq: HamiltonianDef}.
%\begin{equation}\label{eq: Legendretransform:L-H}
%H(M,P) = \sup \{ -\langle P, N \rangle - L(M,N) \ ; \ N \in \ltord \}.
%\end{equation}
We refer to the second variable of the Hamiltonian, $P$, as the \textit{momentum} variable. Differentiation with respect to the momentum variable is denoted by $D_p$. Differentiation with respect to the first variable $M$ is denoted by $D_x$.

Assumptions i)-viii) yield that $H$ is $C^1$ in Fr\'{e}chet sense, strictly convex, and coercive. Furthermore,  
\begin{equation}\label{eq: Legendretransform:H-L}
L(M,N) = \sup \{ -\langle P, N \rangle - H(M,P) \ ; \ P \in \ltord \}.
\end{equation}
Let $N$ and $P$ be the maximizers in \eqref{eq: HamiltonianDef} and \eqref{eq: Legendretransform:H-L}. Then, they are related by the Legendre transform (which is one-to-one from $\ltord$ to itself)
\begin{equation}\label{eq: legtrans}
P=-D_v L(M,N), \quad N=-D_p H(M,P).
\end{equation}
For $N$ and $P$ satisfying \eqref{eq: legtrans}, we have
\begin{equation*}
D_x H(M,P)=-D_x L(M,N). 
\end{equation*}
%where $P$ and $N$ are related by the Legendre transformation \eqref{eq: %legtrans}.

These duality statements can be found in \cite{FS}, in the finite-dimensional case. Similar techniques apply to the infinite-dimensional case.

\section{The discounted-cost infinite-horizon problem}\label{sec: prelimHJeq}

In this section, we study the discounted-cost infinite-horizon problem. 
As is standard in Weak KAM theory, this problem can be used to build solutions 
to the cell problem \cite{lionspapvarad'88}. 

Recall that a function taking values on $\RR\cup \{\pm \infty\}$ is {\em proper} if it is not identically $\pm\infty$. 

\begin{definition} Let $V:L^2(I; \RR ^d)\to \mathbb{R} \cup \{ \pm \infty \}$ be a proper function and $M \in L^2(I; \RR ^d)$ be a point in its domain. Then, a vector $\xi \in \ltord$ 
\begin{itemize}
 \item[i)] is a subdifferential of $V$ at $M$ if $V(M+X)\geq V(M)+ \langle \xi, X \rangle + o(||X||)$;
 \item[ii)] is a superdifferential of $V$ at $M$, if $V(M+X) \leq V(M)+\langle \xi, X \rangle + o(||X||)$. 
\end{itemize}
The set $D^{-}V(M)$ (resp. $D^{+}V(M)$) is the set of subdifferentials (resp.  superdifferentials) at $M$.
\end{definition}
\begin{remark}If the sets $D^{-}V(M)$ and $D^{+}V(M)$ are simultaneously non-empty, then $V$ is differentiable at $M$ and $D^{-}V(M)=D^{+}V(M)=\{\nabla V(M)\}$.
\end{remark}
Let $F:\ltord \times \RR \times \ltord \to \RR$ be a continuous function. 
Consider the first-order infinite-dimensional partial differential equation 
\begin{equation}\label{Eq: HJ}
F(M, V, \nabla V(M))=0.
\end{equation}
\begin{definition} A continuous function $V : L^2(I; \RR ^d) \rightarrow \mathbb{R}$ is a
\begin{itemize}
 \item[i)] viscosity subsolution for \eqref{Eq: HJ} if $F(M,V(M), \zeta) \leq 0$, for all $M \in L^2(I; \RR ^d)$ and all $\zeta \in D^{+}V(M)$;
 \item[ii)] is a viscosity supersolution for \eqref{Eq: HJ} if $F(M,V(M), \zeta) \geq 0$, for all $M \in L^2(I; \RR ^d)$ and all $\zeta \in D^{-}V(M)$;
 \item[iii)] is a viscosity solution for \eqref{Eq: HJ} if $V$ is both a subsolution and a supersolution for \eqref{Eq: HJ}.
\end{itemize}
\end{definition}

Let $V_{\varepsilon}$ be the discounted value function given by \eqref{Eq: InfiniteHorizone}.
Since the Lagrangian $L_c$ is rearrangement invariant and periodic in the spatial variable, so is 
the value function $V_{\varepsilon}$.

We collect several elementary properties of the value function $V_{\eps}$ in the following proposition.
\begin{proposition}\label{vepsproperties}
	For any $t>0$ and $M \in L^2(I; \RR ^d)$, we have that
	\begin{equation}\label{Eq: DPP}
	V_{\eps}(M)= \inf\{ {\int_0^t{e^{-\eps s} L_c(x, \dot{x}) ds} + e^{-\eps t} V_{\eps}(x(T))} ; x(0)=M \}.
	\end{equation}
	Furthermore, $V_{\eps}$ is a viscosity solution of the Hamilton-Jacobi equation \eqref{Eq: HJinfinitehorizon}.
	
	Moreover, 
	\begin{itemize}
		\item [i)] The family of functions $\{\eps V_{\eps}\}$ is uniformly bounded.
		\item [ii)] For every $\eps >0$ the function $V_{\eps}$ is Lipschitz continuous with Lipschitz constant independent of $\eps$.
		\item [iii)] For every $\eps >0$ the function $V_{\eps}$ is semiconcave.
	\end{itemize}
\end{proposition}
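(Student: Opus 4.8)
The plan is to establish the four assertions essentially in the order they are stated, since the dynamic programming principle \eqref{Eq: DPP} and the viscosity property are standard consequences of the definition \eqref{Eq: InfiniteHorizone} once one knows the value function is finite and locally bounded; the real content is in items i)--iii). I would start with i). For the upper bound on $\eps V_\eps$, I would plug in the constant trajectory $\tx(s)\equiv M$: by assumption v) we have $|L_c(M,0)|=|L(M,0)+\langle c,0\rangle|\le C$, so $\mathcal{A}_\eps(\tx)\le C/\eps$, giving $\eps V_\eps(M)\le C$ uniformly in $M$ and $\eps$. For the lower bound, $L_c\ge L-\|c\|\|N\|_{L^2}\ge -\|c\|^2/(2)-\ldots$; more precisely using $L\ge 0$ (assumption iii)) and Young's inequality on the linear term $\int_I c\cdot N\,d\lambda_0$, one gets $L_c(M,N)\ge -\tfrac12\|c\|^2|I|$, hence $\mathcal{A}_\eps(\tx)\ge -\tfrac12\|c\|^2|I|/\eps$ for every admissible $\tx$, so $\eps V_\eps\ge -\tfrac12\|c\|^2|I|$. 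This gives the uniform two-sided bound.

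For ii), the Lipschitz estimate, I would use the dynamic programming principle together with a \emph{controllability} argument: given $M_1,M_2$, to compare $V_\eps(M_1)$ with $V_\eps(M_2)$, take a near-optimal trajectory $\tx$ starting at $M_2$ and prepend to it a short linear segment $s\mapsto M_1+\tfrac{s}{\tau}(M_2-M_1)$ on $[0,\tau]$ that joins $M_1$ to $M_2$ in time $\tau$, with constant velocity $N=(M_2-M_1)/\tau$. By the quadratic growth v), the extra cost on $[0,\tau]$ is bounded by $\int_0^\tau e^{-\eps s}C(1+\|\tx\|^2+\|N\|^2)\,ds\lesssim \tau(1+R^2)+\|M_1-M_2\|^2/\tau$, where $R$ bounds the positions along the segment; optimizing over $\tau$ (e.g.\ $\tau\sim\|M_1-M_2\|$) and using the uniform bound from i) to control the $e^{-\eps\tau}V_\eps(M_2)-V_\eps(M_2)$ discrepancy, one obtains $|V_\eps(M_1)-V_\eps(M_2)|\le C'\|M_1-M_2\|$ on bounded sets with $C'$ independent of $\eps$; periodicity of $V_\eps$ then upgrades this to a global Lipschitz constant, since one may always reduce to $M_1,M_2$ in a fixed fundamental domain.

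For iii), semiconcavity, I would exploit the convexity/regularity hypotheses vii)--viii) on $L$. The standard finite-dimensional argument for optimal-control value functions applies: fix $M$ and a direction $X$, take a near-optimal $\tx$ for $V_\eps(M)$, and use the two shifted competitors $\tx_{\pm}(s)=\tx(s)\pm e^{-\delta s}X$ (or simply $\tx(s)\pm X$ on a short initial interval and unchanged after) as competitors for $V_\eps(M\pm X)$; then $V_\eps(M+X)+V_\eps(M-X)-2V_\eps(M)$ is bounded above by the corresponding second difference of the action, which by assumption viii) (the upper quadratic bound on the second-order remainder of $L_c$, noting the linear term in $c$ cancels in the second difference) is $\le C\|X\|^2$. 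This yields uniform (in $\eps$) semiconcavity with a linear modulus. Finally, with i)--iii) in hand, the verification that $V_\eps$ solves \eqref{Eq: HJinfinitehorizon} in the viscosity sense follows the now-classical program for Hamilton--Jacobi equations in Hilbert spaces \cite{crandalllions'86a,tataru'92}, using \eqref{Eq: DPP} to test against sub/superdifferentials.

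I expect the main obstacle to be item ii), specifically making the controllability/splicing argument rigorous in the infinite-dimensional setting: one must control the $L^2$-norm of positions along the spliced trajectory uniformly (the near-optimal $\tx$ need not have bounded sup-norm a priori), and one must be careful that the constants coming from the quadratic growth v) do not degenerate as $\eps\to 0$. The periodicity of $L_c$, hence of $V_\eps$, is the key tool that lets one always work inside a bounded fundamental domain and thereby keep all constants $\eps$-independent.
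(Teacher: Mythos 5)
Your proposal is correct in outline but proceeds quite differently from the paper: the paper does not actually prove Proposition 3.1 --- it declares the statements standard, pointing to the finite-dimensional references (Fleming--Soner, Bardi--Capuzzo-Dolcetta, Barles) and to the Hilbert-space viscosity literature (Crandall--Lions, Ishii, Tataru, Bessi, Nurbekyan, Gomes--Nurbekyan) and asserting that the same methods apply without change. Your self-contained arguments --- the constant competitor for the upper bound in i), the splicing/controllability estimate combined with reduction to a fundamental domain by periodicity (and chaining for distant points) for ii), the symmetric second-difference competitors $\tx(s)\pm\phi(s)X$ together with assumption viii) for iii), and the DPP-based verification of the viscosity property --- are precisely the standard machinery those citations encode, so the content matches; what your route buys is an explicit verification that all constants are independent of $\eps$, which is what Section 4 actually needs, and a record of where each structural hypothesis i)--viii) enters.

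One step needs repair: the lower bound in i). From $L\ge 0$ and Young's inequality alone you only get $L_c(M,N)\ge -\tfrac12|c|^2-\tfrac12\|N\|^2$, and the term $-\tfrac12\|N\|^2$ is not absorbed by anything, so the claimed bound $L_c\ge -\tfrac12|c|^2$ does not follow from iii) alone. Instead use the coercivity contained in vii): taking $H_1=0$, $H_2=N$ at the base point $(M,0)$ gives $L(M,N)\ge L(M,0)+\langle D_vL(M,0),N\rangle+\gamma\|N\|^2$, while v) and vi) bound $|L(M,0)|$ and $\|D_vL(M,0)\|$ uniformly; hence $L_c(M,N)\ge \gamma\|N\|^2-C'\|N\|-C''\ge -C'''$ with constants independent of $M$, which is the sense in which the paper's later remark that assumptions v)--vii) make $L_c$ bounded below is meant. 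With this correction the rest of your argument, including the $\eps$-independent discount discrepancy estimate in ii), goes through.
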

\begin{proof} In the finite-dimensional case, 
these facts	are standard and are discussed, for instance, in \cite{FS, Bardi, Barl}. In the infinite-dimensional setting, the same methods can be applied without changes. 
Properties of the value function are examined, in the context of viscosity solutions, in \cite{crandalllions'86a, ishii'92, tataru'92, bessi'13, nurbekyan'12, gomesnurbekyan'12a}.
\end{proof}
\begin{corollary}\label{crl: Vdiff} The superdifferential $D^{+}V_{\eps}$ is nonempty at every point $M \in L^2(I; \RR ^d)$. Besides, $V_{\eps}$ is Fr\'{e}chet differentiable on an everywhere dense $G_\delta$ set.
\end{corollary}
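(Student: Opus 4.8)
The plan is to derive both assertions from the regularity of $V_\eps$ already recorded in Proposition~\ref{vepsproperties}: $V_\eps$ is Lipschitz and semiconcave on $\ltord$. I will use semiconcavity in the form that there is a constant $C>0$ for which $M\mapsto V_\eps(M)-C\|M\|^2$ is concave and continuous on all of $\ltord$; if Proposition~\ref{vepsproperties} only supplies a general modulus of semiconcavity, one replaces $C\|M\|^2$ by a smooth convex function with the same modulus and the argument is unchanged.

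First I would prove that $D^+V_\eps(M)\neq\emptyset$ for every $M$. A continuous concave function on the whole Hilbert space $\ltord$ has a nonempty superdifferential at every point — its hypograph admits a supporting hyperplane, and the required one-sided estimate $V_\eps(M+X)-C\|M+X\|^2 \le V_\eps(M)-C\|M\|^2+\langle\xi,X\rangle$ is automatically of the form $o(\|X\|)$-free (in fact linear) in the direction because a concave continuous function on a Hilbert space is locally Lipschitz. Applying this to $M\mapsto V_\eps(M)-C\|M\|^2$ and adding back the derivative $2CM$ of the smooth term yields $D^+V_\eps(M)\neq\emptyset$, which is the first assertion.

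Next I would obtain the dense $G_\delta$ set of differentiability points. Set $g(M):=C\|M\|^2-V_\eps(M)$; by semiconcavity $g$ is convex, and it is continuous (indeed locally Lipschitz) on $\ltord$. Since $\ltord$ is a separable Hilbert space it is reflexive, hence an Asplund space, so every continuous convex function on it is Fr\'echet differentiable at the points of a dense $G_\delta$ subset $\DD\subset\ltord$ (see, e.g., Phelps, \emph{Convex Functions, Monotone Operators and Differentiability}). For $M\in\DD$, $V_\eps(M)=C\|M\|^2-g(M)$ is a difference of two Fr\'echet-differentiable functions (the map $M\mapsto\|M\|^2$ is everywhere Fr\'echet differentiable with derivative $2M$), hence Fr\'echet differentiable at $M$; by the Remark following the definition of the sub/superdifferentials, $D^+V_\eps(M)=D^-V_\eps(M)=\{\nabla V_\eps(M)\}$ there. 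Thus $V_\eps$ is Fr\'echet differentiable on the dense $G_\delta$ set $\DD$.

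The one step that genuinely uses the infinite dimension — and the one I expect to require care — is the passage from ``convex and continuous'' to ``Fr\'echet differentiable on a dense $G_\delta$''. In a general Banach space a continuous convex function is only guaranteed Gateaux-differentiable on a dense $G_\delta$ set (Mazur's theorem), and Fr\'echet differentiability needs the domain to be an Asplund space; so one must invoke the reflexivity of $\ltord$ explicitly rather than treat this as automatic. A more hands-on alternative, avoiding the black box: combine the uniform Lipschitz bound (which bounds $D^+V_\eps$) with the quadratic semiconcavity estimate and run a Baire-category argument directly on $\ltord$, showing that the set on which $V_\eps$ additionally admits a subdifferential contains a dense $G_\delta$, and then conclude once more via the Remark. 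A minor point to settle beforehand is the precise modulus of semiconcavity delivered by Proposition~\ref{vepsproperties}, since a quadratic modulus makes the displayed argument verbatim.
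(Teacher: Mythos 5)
Your argument is correct and follows essentially the same route as the paper: both deduce the two claims from the semiconcavity of $V_\eps$ (Proposition \ref{vepsproperties}) by reducing to the convex function $M\mapsto C\|M\|^2-V_\eps(M)$, using nonemptiness of the subdifferential of a continuous convex function for the first assertion and the Asplund/Fr\'echet-differentiability property of $\ltord$ (the paper cites Asplund's ``strong differentiability space'' Theorem 1, you cite the equivalent Asplund-space statement via Phelps) for the dense $G_\delta$ of differentiability points. Your explicit handling of the quadratic modulus and the decomposition is just a spelled-out version of what the paper leaves implicit, so no further changes are needed.
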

\begin{proof}
A convex function on a Banach space has a non-empty subdifferential at every point where it is finite and continuous \cite{minty'64}. Moreover, if the Banach space is also a \textit{strong differentiability space} \cite{asplund'68}, then every convex function defined on it is Fr\'{e}chet differentiable on a $G_{\delta}$ dense subset of its domain of continuity. $\ltord$ is a strong differentiability space (Theorem 1, \cite{asplund'68}). Furthermore, $V_{\eps}$ is semiconcave, finite and everywhere continuous. Accordingly, $D^{+}V_{\eps}(M) \neq \emptyset$ for all $M \in L^2(I; \RR ^d)$ and $V_{\eps}$ is Fr\'{e}chet differentiable on a $G_\delta$ dense subset of $\ltord$.
\end{proof}

In \cite{tudgangbo'10a}, the authors proved that, in the one-dimensional case, when $M$ is monotone non-decreasing, \eqref{Eq: InfiniteHorizone} admits a minimizer in $H^2_{loc}((0,\infty);L^2(I))$ that satisfies the Euler-Lagrange equation. Here, we establish
the existence of minimizers on an everywhere dense $G_\delta$ subspace of $L^2(I; \RR ^d)$, for any $d\geq 1$. 

Next, we detail the proof of 
the main result of this section, Theorem \ref{TEO1}. 
%\ begin{theorem}\label{thm: existenceV_eps}
% For every differentiability point $M$ of the value function $V_{\eps}$ there exists a %unique curve $\tx ^* \in C^1\left([0.\infty), \ltord \right)$ such that $\tx^*(0)=M$ and it %is a minimizer of the problem \eqref{Eq: InfiniteHorizone}. Furthermore, %$\dot{\tx}^*(0)=-D_p H_c(M,\nabla V_\eps (M))$ and $\tx^*$ solves the Euler-Lagrange %equation
%\begin{equation}\label{eq: ELforVeps}
% \frac{d}{ds} D_v e^{-\eps s}L_c(\tx, \dot{\tx})=D_x e^{-\eps s} L_c(\tx, \dot{\tx}).
%\end{equation}
%\end{theorem}
\begin{proof}[Proof of Theorem \ref{TEO1}]
In \cite{gomesnurbekyan'12a}, we studied the finite horizon optimal control problems in Hilbert spaces. We proved that at every point of differentiability of the value function, there exists a unique $C^1$ minimizer (Theorem 6.2, \cite{gomesnurbekyan'12a}). Since the infinite horizon problem can be seen as a finite horizon one, the existence of $\tx^*$ is a direct consequence of that result.
It is also standard that minimizers solve the Euler-Lagrange equation \eqref{eq: ELforVeps} \cite{nurbekyan'12}.
%Further necessary conditions are standard in finite-dimensional case \cite{FS} and are also %true in infinite-dimensional setting (see review of these facts for the %infinite-dimensional setting for instance in \cite{gomesnurbekyan'12a}).
\end{proof}

\section{The infinite-dimensional weak KAM theory}\label{sec: thecellproblem}

In this section, we prove our main results: Theorem \ref{Thm: MainU}, Theorem \ref{t1p3}, Corollary 1.1, and Corollary 1.2.

Closed one-forms on $\TT ^d / \GG$ are given by $DU + c \chi _I$, for some periodic function $U : \ltord \to \RR$ and some $c \in \RR ^d$ \cite{tudgangbo'10a,tudgangbo'12}. Hence, the cell problem associated with \eqref{eq: InfDynamics} is 
\begin{equation}\label{eq: CellProbIntro}
H(M,DU+c \chi _I)=\lambda,
\end{equation}
where $\lambda \in \RR$. Moreover,
as stated in Theorem \ref{Thm: MainU}, for every $c \in \RR ^d$ there exists a unique number $\lambda= \bar{H}(c)$ such that \eqref{eq: CellProbIntro} has a periodic rearrangement invariant viscosity solution $U$.
In Proposition \ref{Prp: OptCntrlforU}, we prove that
this solution is a fixed point of the \emph{Lax-Oleinik semigroup}, that is,
\begin{equation}\label{eq: CellProbVarIntro}
U(M)=\inf_{\tx\in AC^2((t,t_1), L^2(I; \RR ^d))} \left\{\int_t^{t_1}{\left( L_c(\tx(s), \dot{\tx}(s))+\bar{H}(c) \right) \ ds}+U(\tx(t_1));\ \tx(t)=M\right\}
\end{equation}
for any $M \in \ltord$ and $t<t_1$.
%Here  the infimum is taken over all
%$\tx\in AC^2((t,t_1), L^2(I; \RR ^d))$.
The case $d=1$ was studied in \cite{tudgangbo'10a, tudgangbo'10b}, in a slightly weaker form in what concerns the Lax-Oleinik semigroup. Analogous results are available on the space of probability measures in \cite{tudgangbo'12}.

Additionally, we show that $U$ is semiconcave, and hence Fr\'{e}chet differentiable on a $G_ {\delta}$ everywhere dense set (Proposition \ref{Prp: USemiconcavity}). Furthermore, at differentiability points $M$ of $U$, the infimum in \eqref{eq: CellProbVarIntro} is attained at a $C^1$ minimizer (Theorem \ref{Thm: MainU}). This issue was settled for $d=1$ in  \cite{tudgangbo'10a} using different ideas, and the higher-dimensional case was not addressed there. A corresponding result on the space of probability measures can be found in \cite{tudgangbo'12}.

A curve $\tx : [t_0,t_1] \to \ltord$ is called a \emph{$(U,c,L)$-calibrated curve} if
\begin{equation*}
U(\tx(\beta))-U(\tx(\alpha)) = \int_{\beta}^{\alpha}{\left( L_c(\tx(s), \dot{\tx}(s))+\bar{H}(c) \right) \ ds}
\end{equation*}
for all $\alpha, \beta \in [t_0,t_1]$. Here,
we prove that for any differentiability point $M$ of $U$ there exists a calibrated curve defined on $[0, \infty)$ starting at $M$.

\subsection{The cell problem: existence of solutions and elementary properties}

We begin by considering the limit as $\eps \to 0$ of the solutions $V_\eps$ 
to \eqref{Eq: HJinfinitehorizon}.
\begin{proposition}\label{Prp: UandH(c)}
	Let $\eps>0$ and
	$V_\epsilon$ be 
	a solution to \eqref{Eq: HJinfinitehorizon}.
	Define $U_{\eps}:=V_{\eps}-\inf V_{\eps}$.
Then
\begin{itemize}
 \item[i)] the function $U_{\eps}$ is rearrangement invariant for every $\eps>0$. Furthermore, the family of functions $\{ U_{\eps} \}$ is uniformly Lipschitz continuous.
 \item[ii)] The family of functions $\{U_{\eps}\}$ has a uniformly convergent subsequence with a Lipschitz continuous limit $U$. Additionally, the family of functions $\{\eps V_{\eps}\}$ has a uniformly convergent subsequence with constant limit depending on $c$: $-\bar{H}(c)$.
\end{itemize}
\end{proposition}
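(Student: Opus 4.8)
The plan is to establish the three quantitative facts — rearrangement invariance of $U_\eps$, uniform Lipschitz bounds, and convergence of subsequences — by combining the elementary properties of $V_\eps$ from Proposition \ref{vepsproperties} with a normalization and a compactness argument on the quotient space $\SS^d$.

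First I would check that $U_\eps = V_\eps - \inf V_\eps$ inherits rearrangement invariance from $V_\eps$: by Proposition \ref{vepsproperties} (and the remark preceding it) $V_\eps$ is rearrangement invariant and periodic because $L_c$ is, so subtracting the constant $\inf V_\eps$ preserves this. The uniform Lipschitz bound on $\{U_\eps\}$ is immediate from Proposition \ref{vepsproperties}(ii), which already gives a Lipschitz constant for $V_\eps$ independent of $\eps$; subtracting a constant does not change the Lipschitz constant. I would also note that $\inf V_\eps$ is finite: $V_\eps$ is bounded below since $L_c$ is bounded below (so $\mathcal{A}_\eps \geq -C/\eps$ say), and it is bounded above by evaluating the action on the constant trajectory $\tx \equiv M$, using assumption v) ($|L(M,0)| \leq C$) together with the periodicity, which lets us take $M$ in a bounded fundamental domain. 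Hence $U_\eps \geq 0$ and $\inf U_\eps = 0$.

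For part ii), the main point is compactness. The functions $U_\eps$ are periodic and rearrangement invariant, hence (Proposition \ref{Prp: GInvariant}) descend to functions on $\SS^d$, which is a compact metric space isometric to $(\Pp(\Tt^d), W_2)$. Since $\pi$ is $1$-Lipschitz and the $U_\eps$ are uniformly Lipschitz on $\ltord$, the induced functions on $\SS^d$ form a uniformly Lipschitz, hence uniformly bounded and equicontinuous, family; moreover they all vanish at their infimum, so they are uniformly bounded on the compact space $\SS^d$. By the Arzelà--Ascoli theorem there is a subsequence converging uniformly on $\SS^d$, and pulling back via $\pi$ gives uniform convergence of the corresponding subsequence of $\{U_\eps\}$ on $\ltord$ to a limit $U$, which is again Lipschitz with the same constant. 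For the family $\{\eps V_\eps\}$: it is uniformly bounded by Proposition \ref{vepsproperties}(i), so along a further subsequence $\eps V_\eps$ converges (as a number, after noting below that the limit is constant). To see the limit is a constant function, write $\eps V_\eps(M) = \eps U_\eps(M) + \eps \inf V_\eps$; the first term tends to $0$ uniformly since $\{U_\eps\}$ is uniformly bounded, so $\eps V_\eps$ and $\eps \inf V_\eps$ have the same limit, which is a constant independent of $M$; we name it $-\bar H(c)$. Passing to a common subsequence for both families completes the argument.

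The main obstacle is the compactness step: $\ltord$ itself is not locally compact, so one genuinely needs to pass to the quotient $\SS^d$ and exploit its compactness via Proposition \ref{Prp: GInvariant}, being careful that the descent of $U_\eps$ to $\SS^d$ is well-defined (this uses rearrangement invariance plus the equivalence "$M_1 \sim M_2 \Rightarrow U_\eps(M_1) = U_\eps(M_2)$", which is exactly the content of Proposition \ref{Prp: GInvariant} for continuous periodic rearrangement-invariant functions) and that uniform convergence upstairs is equivalent to uniform convergence of the quotiented functions. Everything else is bookkeeping with the bounds already recorded.
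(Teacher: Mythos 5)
Your proposal is correct and follows essentially the paper's own argument: normalize to $U_\eps = V_\eps - \inf V_\eps$, use rearrangement invariance and periodicity to descend to the compact quotient $\SS^d$, apply Arzel\`a--Ascoli there, and conclude that the limit of $\eps V_\eps$ is constant (the paper reads this off from the Lipschitz constant of $\eps V_\eps$ tending to $0$, you from the decomposition $\eps V_\eps = \eps U_\eps + \eps\inf V_\eps$ --- a cosmetic difference). One minor slip: the Lipschitz bound for the induced functions on $\SS^d$ does not follow from $\pi$ being $1$-Lipschitz (that gives the reverse implication); instead, use the invariance $U_\eps(M_2\circ G + Z)=U_\eps(M_2)$ to get $|U_\eps(M_1)-U_\eps(M_2)|\le C\,\|M_1-M_2\circ G-Z\|$ and take the infimum over $G\in\GG$, $Z\in\ltordz$, which gives the same Lipschitz constant with respect to $\dw$.
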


\begin{remark}
A priori, the constant limit of the convergent subsequence of the $\{\eps V_{\eps}\}$ is not unique, and Proposition \ref{Prp: OptCntrlforU} is valid for any such limit and corresponding limit function $U$. However, it is simple to check that\eqref{Eq: MinProblemforU} implies the uniqueness of such a constant. Hence, $\bar{H}(c)$ is uniquely determined by the vector $c \in \RR^d$.  
\end{remark}

\begin{proof} The family of functions $V_{\eps}$ is equilipschitz (Proposition \ref{vepsproperties}), thus, the family $U_{\eps}$ is also equilipschitz.

The Lagrangian $L_c$ is  rearrangement invariant hence $V_{\eps}$ and $U_{\eps}$ are also rearrangement invariant functions. By Proposition \ref{Prp: GInvariant}, we may identify $U_{\eps}$ and $V_{\eps}$ with functions on $\mathbb{S}^d$. Since $\mathbb{S}^d$ is a compact metric space, $U_{\eps}$ reach their minima that are $0$. Furthermore, since they are uniformly Lipschitz, we obtain that $\{U_{\eps}\}$ is bounded equicontinuous family of functions on the compact space $\mathbb{S}^d$. Therefore, by the Arzela-Ascoli Theorem, we conclude that it has a uniformly convergent subsequence. The limit $U$ is also Lipschitz continuous.

From Proposition \ref{vepsproperties}, we have that $\{\eps V_{\eps}\}$ is a uniformly bounded and equicontinuous family of functions. Hence, by the Arzela-Ascoli theorem, we obtain that it has a uniformly convergent subsequence. The limit of this subsequence has Lipschitz constant $0$, which is a constant function.
\end{proof}
\begin{proposition}\label{Prp: OptCntrlforU} For any $t>0$ and any $M \in L^2(I; \RR ^d)$,
	$U$ solves \eqref{Eq: MinProblemforU}.
\end{proposition}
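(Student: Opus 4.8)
The plan is to pass to the limit $\eps \to 0$ in the dynamic programming principle \eqref{Eq: DPP} for $V_\eps$. Fix $M \in \ltord$ and $t>0$. Recall from Proposition \ref{Prp: UandH(c)} that, along a suitable subsequence $\eps_k \to 0$, we have $U_{\eps_k} \to U$ uniformly on $\SS^d$ (equivalently, uniformly on bounded subsets of $\ltord$ after lifting via $\pi$), $\eps_k V_{\eps_k} \to -\bar H(c)$ uniformly, and $V_{\eps_k} = U_{\eps_k} + \inf V_{\eps_k}$ with $\eps_k \inf V_{\eps_k} \to -\bar H(c)$. Rewrite \eqref{Eq: DPP} with $T$ replaced by $t$ as
\begin{equation*}
U_\eps(M) + \inf V_\eps = \inf_{\tx(0)=M}\left\{ \int_0^t e^{-\eps s} L_c(\tx,\dot\tx)\,ds + e^{-\eps t}\big(U_\eps(\tx(t)) + \inf V_\eps\big)\right\},
\end{equation*}
and subtract $\inf V_\eps$ from both sides, so the equation becomes
\begin{equation*}
U_\eps(M) = \inf_{\tx(0)=M}\left\{ \int_0^t e^{-\eps s} L_c(\tx,\dot\tx)\,ds + e^{-\eps t}U_\eps(\tx(t)) + (e^{-\eps t}-1)\inf V_\eps\right\}.
\end{equation*}
Since $(e^{-\eps t}-1)\inf V_\eps = \frac{e^{-\eps t}-1}{\eps}\cdot \eps \inf V_\eps \to (-t)\cdot(-\bar H(c)) = \bar H(c)\, t$, the last term converges (uniformly) to $\bar H(c)\,t$. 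So formally the limit is exactly \eqref{Eq: MinProblemforU}; the work is to justify the interchange of limit and infimum, which requires the two one-sided estimates below.

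For the $\leq$ direction (that $U(M)$ is bounded above by the infimum in \eqref{Eq: MinProblemforU}): fix any admissible competitor $\tx \in AC^2((0,t);\ltord)$ with $\tx(0)=M$. It is a feasible curve for the $V_\eps$-problem as well, so $U_\eps(M) \le \int_0^t e^{-\eps s}L_c(\tx,\dot\tx)\,ds + e^{-\eps t}U_\eps(\tx(t)) + (e^{-\eps t}-1)\inf V_\eps$. Here $L_c(\tx,\dot\tx) \in L^1(0,t)$ by assumption v) and the fact that $\tx \in AC^2$, so $e^{-\eps s}L_c(\tx,\dot\tx) \to L_c(\tx,\dot\tx)$ in $L^1(0,t)$ by dominated convergence; combined with $U_\eps(\tx(t)) \to U(\tx(t))$ and the computation above, letting $\eps = \eps_k \to 0$ gives $U(M) \le \int_0^t (L_c(\tx,\dot\tx)+\bar H(c))\,ds + U(\tx(t))$. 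Taking the infimum over $\tx$ yields the inequality. For the $\geq$ direction, one uses near-optimal curves: for each $\eps$ pick $\tx^\eps$ with $\tx^\eps(0)=M$ realizing \eqref{Eq: DPP} up to error $\eps$. The Tonelli-type coercivity (assumption vii), which gives $\gamma\|\dot\tx\|^2$ control, together with the uniform bound on $U_\eps$ and on $\eps\inf V_\eps$, forces a uniform bound on $\int_0^t \|\dot\tx^\eps\|^2\,ds$ and on $\sup_{[0,t]}\|\tx^\eps\|$; hence $\{\tx^\eps\}$ is bounded in $AC^2((0,t);\ltord)$ and (along a further subsequence) $\dot\tx^\eps \rightharpoonup w$ weakly in $L^2((0,t);\ltord)$ with $\tx^\eps(t) \to \tx^*(t)$ weakly, where $\tx^*(s) = M + \int_0^s w$. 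By weak lower semicontinuity of the convex action functional $\tx \mapsto \int_0^t L_c(\tx,\dot\tx)\,ds$ (convexity in the velocity from assumption vii), plus $e^{-\eps s} \to 1$ and $U_\eps \to U$, we get $U(M) = \lim U_\eps(M) \ge \int_0^t (L_c(\tx^*,\dot\tx^*)+\bar H(c))\,ds + U(\tx^*(t))$, which is $\ge$ the infimum in \eqref{Eq: MinProblemforU}.

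The main obstacle is the $\geq$ direction, specifically the compactness and lower-semicontinuity argument in a non-locally-compact Hilbert space: one must extract limits of the near-optimal curves only weakly, and then argue that the infimum value is lower-semicontinuous along this weak convergence. The key technical inputs are assumption vii) (coercivity giving the $L^2$-bound on velocities and convexity in velocity giving weak lower semicontinuity of the action) and the already-established uniform Lipschitz/boundedness of $\{U_\eps\}$ and uniform boundedness of $\{\eps V_\eps\}$ from Proposition \ref{vepsproperties} and Proposition \ref{Prp: UandH(c)}. One must also check that the weak limit $\tx^*$ is genuinely admissible, i.e. lies in $AC^2((0,t);\ltord)$, which follows because $w \in L^2((0,t);\ltord)$. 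Care is needed to handle the lift between $\SS^d$ and $\ltord$ consistently (the $U_\eps$ are rearrangement invariant, so evaluating them along curves in $\ltord$ is unambiguous via $\pi$), but this is routine given Proposition \ref{Prp: GInvariant}.
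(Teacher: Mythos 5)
Your ``$\leq$'' direction is essentially the paper's argument and is fine. The gap is in the ``$\geq$'' direction, which you correctly identify as the main obstacle but then do not actually overcome. After extracting $\dot{\tx}^{\eps}\rightharpoonup w$ in $L^2((0,t);\ltord)$, the states $\tx^{\eps}(s)$ converge only \emph{weakly} in $\ltord$, and nothing in assumptions i)--viii) makes $M\mapsto L_c(M,N)$ or $U$ lower semicontinuous along weak convergence of the state. Convexity in the velocity (assumption vii) yields weak lower semicontinuity of the action only when the states converge strongly (the classical Tonelli scheme: Arzel\`a--Ascoli plus local compactness of the state space gives uniform convergence of the curves); here $\ltord$ is not locally compact, which is precisely the difficulty the whole paper is organized around. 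Concretely, for the model mechanical Lagrangian the interaction term $\int_{I\times I}W(M(z)-M(\bar z))\,d\lambda_0\,d\lambda_0$ is not weakly continuous: for oscillating $M_n\rightharpoonup 0$ the laws $M_n\sharp\lambda_0$ do not converge to $\delta_0$, so this term converges to an averaged value, not to $W(0)$. For the same reason $U_{\eps}(\tx^{\eps}(t))\to U(\tx^*(t))$ fails in general: $U$ factors through $\SS^d\cong(\Pp(\Tt^d),W_2)$, and the $W_2$-limit of $\pi(\tx^{\eps}(t))$ need not be $\pi(\tx^*(t))$, while $U$ (Lipschitz, semiconcave) is not weakly lsc. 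So the chain $U(M)=\lim U_{\eps}(M)\ge \int_0^t(L_c(\tx^*,\dot{\tx}^*)+\bar{H}(c))\,ds+U(\tx^*(t))$ is not justified, and the inequality you need cannot be obtained this way without substantially stronger (unassumed) weak-continuity hypotheses.

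The paper's proof avoids extracting any limit curve. Take $\eps_n\to 0$ and curves $\tx_n$ with $\tx_n(0)=M$ that are $\tfrac1n$-optimal for the discounted principle \eqref{Eq: DPP}. Each $\tx_n$ is itself admissible in \eqref{Eq: MinProblemforU}, so $\int_0^t(L_c(\tx_n,\dot{\tx}_n)+\bar{H}(c))\,ds+U(\tx_n(t))$ is already bounded below by the infimum there; the discounted inequality is then rewritten as this undiscounted quantity plus two errors, $I=(e^{-\eps_n t}-1)V_{\eps_n}(\tx_n(t))-t\bar{H}(c)$ and $J=\int_0^t(e^{-\eps_n s}-1)L_c(\tx_n,\dot{\tx}_n)\,ds$, which tend to $0$ using the uniform convergence $\eps V_{\eps}\to-\bar{H}(c)$, the lower bound on $L_c$, and the boundedness of the discounted actions; together with $\|U_{\eps_n}-U\|_\infty\to 0$ (uniformity coming from compactness of $\SS^d$, not of the curves) this yields $U(M)\ge$ the infimum. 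Replacing your weak-compactness/lower-semicontinuity step by this admissibility-plus-vanishing-errors argument closes the gap; as written, your ``$\geq$'' half does not.
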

\begin{proof} Fix any $M \in L^2(I; \RR ^d)$. We claim that for any $\tx \in AC^2((0,t);L^2(I; \RR ^d))$ such that $\tx(0)=M$,
\begin{equation}
\label{s1}
U(M) \leq \int_0^t{\left(L_c(\tx(s), \dot{\tx}(s))+\bar{H}(c)\right) ds}+U(\tx(t)).
\end{equation}
From \eqref{Eq: DPP}, we have that, for every $\eps>0$,
$$V_{\eps}(M) \leq \int_0^t{e^{-\eps s}L_c(\tx(s), \dot{\tx}(s))ds}+e^{-\eps t}V_{\eps}(\tx(t)).$$
Because $U_{\eps}=V_{\eps}-\inf V_{\eps}$, we have
$$U_{\eps}(M) \leq \int_0^t{e^{-\eps s}L_c(\tx(s), \dot{\tx}(s))ds}+U_{\eps}(\tx(t))+(e^{-\eps t}-1)V_{\eps}(\tx(t)).$$
Passing to the limit when $\eps \to 0$ and using Proposition \ref{Prp: UandH(c)}, we obtain 
\eqref{s1}.

Next, we prove the opposite inequality. Fix $M \in \ltord,\ t>0$.
Choose a sequence $\{ \eps_n >0\}$ converging to $0$. Let $\{\tx _n \}$ be a sequence of curves that satisfy
$$
\int_0^t{e^{-\eps_n s}L_c(\tx_n(s), \dot{\tx}_n(s))ds}+e^{-\eps_n t}V_{\eps_n}(\tx_n(t)) \leq V_{\eps_n}(M)+\frac{1}{n}.
$$
The previous inequality can be rewritten as
$$
\int_0^t{\left(L_c(\tx_n, \dot{\tx}_n)+\bar{H}(c)\right)ds}+U_{\eps_n}(\tx_n(t))+I+J \leq U_{\eps_n}(M)+\frac{1}{n},
$$
where
\begin{equation*}
\begin{cases}
I=(e^{-\eps_n t}-1)V_{\eps_n}(\tx_n(t))-t\bar{H}(c),\\
J=\int_0^t{(e^{-\eps_n s}-1)L_c(\tx_n,\dot{\tx}_n)}.
\end{cases}
\end{equation*}
Thus, if we show that $I,J \rightarrow 0$, we are done.
Due to Proposition \ref{Prp: UandH(c)}, $I \to 0$ . Assumptions v)-vii) guarantee that $L_c$ is bounded by below. Since adding a constant to $L_c$ in $J$ does not change the limit, we can assume that $L_c \geq 0$. Hence,
$$|J|=\int_0^t{(e^{\eps_n s}-1)e^{-\eps_n s}L_c(\tx_n,\dot{\tx}_n)} \leq (e^{-\eps_n t}-1) \int_0^t{e^{-\eps_n s}L_c(\tx_n,\dot{\tx}_n)} \to 0,$$
because the sequence $\int_0^t{e^{-\eps_n s}L_c(\tx_n,\dot{\tx}_n)}$ is bounded.
\end{proof}
 The function $U$ enjoys properties
 analogous to the ones satisfied by $V_\eps$, namely:
\begin{proposition}\label{Prp: USemiconcavity}$U:\ltord \rightarrow \RR$ is semiconcave. Furthermore, $U:L^2(I; \RR ^d) \rightarrow \RR$ has non-empty superdifferential $D^{+}U(M)$ at every point $M \in L^2(I; \RR ^d)$, and it is differentiable on an everywhere dense $G_{\delta}$ set.
\end{proposition}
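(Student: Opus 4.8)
The plan is to derive semiconcavity of $U$ directly from the uniform semiconcavity of the approximants $V_\eps$ (equivalently $U_\eps$), which was already recorded in Proposition \ref{vepsproperties}(iii) and Corollary \ref{crl: Vdiff}, and which holds with a modulus independent of $\eps$ (this uniformity is implicit in the proof of Proposition \ref{vepsproperties}, coming from assumption viii) together with the discounted dynamic programming principle \eqref{Eq: DPP}). Semiconcavity with a linear modulus means there is a constant $C>0$ such that for all $M, H \in \ltord$,
\begin{equation*}
U_\eps(M+H) + U_\eps(M-H) - 2U_\eps(M) \leq C\|H\|^2,
\end{equation*}
with $C$ not depending on $\eps$. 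Since $U = \lim_{\eps \to 0} U_\eps$ along the subsequence from Proposition \ref{Prp: UandH(c)}, and this convergence is pointwise (indeed uniform after identification with functions on $\SS^d$), passing to the limit in the inequality above is immediate and yields
\begin{equation*}
U(M+H) + U(M-H) - 2U(M) \leq C\|H\|^2,
\end{equation*}
i.e. $U$ is semiconcave with the same constant $C$.

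Once semiconcavity is established, the statements about the superdifferential and Fréchet differentiability follow by exactly the argument used in Corollary \ref{crl: Vdiff}: a semiconcave function is, locally, a concave function plus a smooth (quadratic) function, hence $M \mapsto U(M) - \tfrac{C}{2}\|M\|^2$ is concave and continuous on all of $\ltord$. By Minty's theorem \cite{minty'64} its subdifferential is nonempty at every point, so $D^+U(M) \neq \emptyset$ everywhere; and since $\ltord$ is an Asplund (strong differentiability) space \cite{asplund'68}, this concave function is Fréchet differentiable on a dense $G_\delta$ subset, and therefore so is $U$. One should also note $U$ is finite and everywhere continuous (it is Lipschitz by Proposition \ref{Prp: UandH(c)}(ii)), which is what makes these two cited results applicable.

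The only point that requires genuine care — and which I would treat as the main obstacle — is confirming that the semiconcavity constant for $V_\eps$ (or $U_\eps$) is truly $\eps$-independent. This is where one must revisit the standard finite-dimensional estimate: given a near-optimal trajectory $\tx$ starting at $M$, one builds competitors for the problems started at $M \pm H$ by adding a suitable correction (e.g. $\tx(s) \pm H(1 - s/\tau)$ on a short interval $[0,\tau]$, constant thereafter), estimates the change in action using the quadratic upper bound viii) on $L$ (hence on $L_c$), and checks that the discount factor $e^{-\eps s} \leq 1$ only helps. The resulting bound depends on $K_L$, $\tau$, and $\|H\|$ but not on $\eps$. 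Since Proposition \ref{vepsproperties} already asserts semiconcavity of each $V_\eps$ with constant independent of $\eps$ (item ii) there is explicit about $\eps$-independence of the Lipschitz constant, and the proof referenced in \cite{FS, Bardi, Barl} gives the same for semiconcavity), I would simply cite that and carry out the limiting argument above, perhaps with a one-line reminder of why the construction is $\eps$-uniform.
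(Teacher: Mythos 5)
Your argument is correct, but it takes a slightly different route from the one the paper intends. The paper gives no explicit proof: the proposition is presented as $U$ inheriting the properties of $V_{\eps}$, and the natural reading — given that Proposition \ref{Prp: OptCntrlforU} has just established the fixed-horizon representation \eqref{Eq: MinProblemforU} — is to run the semiconcavity argument directly on that representation with a fixed horizon $t>0$: for a near-minimizer $\tx$ from $M$, the competitors $\tx(s)\pm H(1-s/t)$ share the terminal point $\tx(t)$, so the (merely Lipschitz) terminal cost $U(\tx(t))$ cancels in the second difference, and assumption viii) gives $U(M+H)+U(M-H)-2U(M)\leq C\|H\|^{2}$ with no $\eps$ anywhere; this is the usual regularizing effect of the Lax--Oleinik semigroup. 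You instead pass the semiconcavity bound for $V_{\eps}$ to the limit $U=\lim U_{\eps}$, which is equally valid but carries the extra burden — which you correctly identify and discharge — of checking that the constant is $\eps$-independent, since Proposition \ref{vepsproperties}(iii) only asserts semiconcavity for each fixed $\eps$ and is explicit about $\eps$-uniformity only for the Lipschitz constant; your observation that the second difference of $L_c$ along the competitor is controlled by viii) with first-order terms cancelling, and that the factor $e^{-\eps s}\leq 1$ only helps, settles this. The remaining steps (writing $M\mapsto U(M)-\tfrac{C}{2}\|M\|^{2}$ as a continuous concave function, then invoking Minty for the nonempty superdifferential and Asplund's strong differentiability of $\ltord$ for Fr\'echet differentiability on a dense $G_{\delta}$) coincide with the argument of Corollary \ref{crl: Vdiff} and with what the paper intends. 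In short: your proof works; the paper's implicit proof avoids the $\eps$-uniformity issue altogether by exploiting the representation of $U$ itself, while yours reuses Proposition \ref{vepsproperties} at the price of the uniformity check.
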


%\begin{theorem}\label{Thm: MainU}$U$ is a viscosity solution to the equation
%\begin{equation}\label{Eq: MainU}H_c(M,\nabla U)=\bar{H}(c).
%\end{equation}
%Additionally, for every differentiability point $M \in L^2(I; \RR ^d)$ of the function $U$ %there exists a unique trajectory $\tx^* \in C^1([0,\infty);L^2(I; \RR ^d))$ which is a %minimizer for the problem \eqref{Eq: MinProblemforU}, for all times $t>0$, and which %satisfies the Euler-Lagrange equation
%\begin{equation}\label{Eq: ELforU}
%\frac{d}{ds} D_v L_c(\tx, \dot{\tx})=D_x L_c(\tx, \dot{\tx}),
%\end{equation}
%with $\tx^*(0)=M$ and $\dot{\tx}^*(0)= -D_p H_c(M, \nabla U(M))$.
%
%Furthermore, $\tx$ minimizes the action %$\tilde{\mathcal{A}}_{t}(\ty)=\int_0^t{\left(L_c(\ty(s),\dot{\ty}(s))+\bar{H}(c)\right) %ds}$ over all trajectories $\ty \in AC^2((0,t);L^2(I; \RR ^d))$ with endpoints %$\ty(0)=\tx(0)$ and $\ty(t)=\tx(t)$.
%\end{theorem}

We now gather the previous results and present the proof of Theorem \ref{Thm: MainU}.
\begin{proof}[Proof of Theorem \ref{Thm: MainU}]
The proofs of Proposition \ref{vepsproperties} and Theorem \ref{TEO1} apply without substantial changes due to Propositions \ref{Prp: UandH(c)}, \ref{Prp: OptCntrlforU} and \ref{Prp: USemiconcavity}. 
\end{proof}

%{\bf I would remove the next remark - the space Sd is compact, and there is no reason why %we would bother with a non-compact space if this is not needed}
%
%
%\begin{remark}
%To prove existence of a solution $U$ to the cell problem, and a fixed point for the %Lax-Oleinik semigroup, our technique is similar to the one of Lions, Papanicolau and %Varadhan in \cite{lionspapvarad'88}. However, in that paper, the Arzela-Ascoli theorem is %involved, and it would seem that the compactness of $\mathbb{S}^d$ is essential. In fact, %this is not necessary. One can use approach due to Fathi \cite{F} and obtain existence of %$U$ for general infinite-dimensional Hilbert spaces without any symmetries of the system.
%\end{remark}

\subsection{Existence of an invariant subset}\label{subsec: twosidedmins}

A trajectory $\tx \in AC^2_{loc}(\RR; L^2(I; \RR ^d))$ is called a two-sided minimizer (or two-sided $(U,c,L)$-calibrated curve) if
\begin{equation}\label{eq: twosidedmins}
U(\tx(t_1))=\int_{t_1}^{t_2}{\left(L_c(\tx(s),\dot{\tx}(s))+\bar{H}(c)\right) ds}+U(\tx(t_2)),
\end{equation}
for all $-\infty<t_1<t_2<\infty$.

We proceed by proving some preliminary lemmas.
\begin{lemma}\label{Lemma: DiffofPerInvfunction} Let $F:L^2(I; \RR ^d) \rightarrow \RR$ be a periodic and rearrangement invariant function. Suppose $M_1$ is a Fr\'{e}chet differentiability point of $F$ and $M_2 \sim M_1$. Then $F$ is Fr\'{e}chet differentiable at $M_2$ and $\nabla F(M_1) \sim \nabla F(M_2)$.
\end{lemma}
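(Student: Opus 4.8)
The plan is to reduce the statement to two separate issues: first, that $M_2 \sim M_1$ implies $M_2$ can be reached from $M_1$ by a limit of composition-and-translation operations; second, that Fréchet differentiability together with a first-order expansion is compatible with such operations. Concretely, recall that $M_2 \sim M_1$ means $\dw(M_1,M_2)=0$, i.e. there exist sequences $G_n \in \GG$ and $Z_n \in \ltordz$ with $\|M_1 - M_2\circ G_n - Z_n\| \to 0$. The key observation is that the maps $X \mapsto X \circ G$ (for $G \in \GG$) are linear isometries of $\ltord$, and translation by $Z \in \ltordz$ is an isometry of $\ltord$ under which $F$ is invariant by periodicity. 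So the first step is to check carefully that differentiability of $F$ at $M_1$ propagates to each $M_2\circ G_n + Z_n$ with the gradient transported by the inverse operation.

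First I would establish the exact case: if $F$ is periodic and rearrangement invariant and $F$ is Fréchet differentiable at a point $P$, and $Q = P\circ G + Z$ for some $G\in\GG$, $Z\in\ltordz$, then $F$ is Fréchet differentiable at $Q$ with $\nabla F(Q) = (\nabla F(P))\circ G^{-1}$. This follows by writing, for $X \in \ltord$,
\begin{equation*}
F(Q+X) = F(P\circ G + Z + X) = F((P + X\circ G^{-1})\circ G + Z) = F(P + X\circ G^{-1}),
\end{equation*}
using rearrangement invariance (with $G^{-1}\in\GG$) and periodicity; then applying differentiability of $F$ at $P$ to the increment $X\circ G^{-1}$, whose norm equals $\|X\|$ since composition with $G^{-1}$ is an isometry, gives $F(Q+X) = F(P) + \langle \nabla F(P), X\circ G^{-1}\rangle + o(\|X\|) = F(Q) + \langle (\nabla F(P))\circ G^{-1}, X\rangle + o(\|X\|)$, where the last equality uses that composition with $G$ is the adjoint of composition with $G^{-1}$ on $L^2$ (both being measure-preserving). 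This also shows $\nabla F(P) \sim \nabla F(Q)$, since they differ by a $\GG$-composition.

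The remaining step is to pass to the limit. Set $M_1^{(n)} := M_2 \circ G_n + Z_n \to M_1$. By the exact case, $F$ is differentiable at each $M_1^{(n)}$ with $\nabla F(M_1^{(n)}) = \nabla F(M_1^{(n)})$ related to the (would-be) gradient at $M_2$; running the argument the other direction, $F$ is differentiable at $M_2$ iff it is differentiable at each $M_1^{(n)}$, and $\nabla F(M_2) = (\nabla F(M_1^{(n)}))\circ G_n^{-1}$... wait — this circularity is exactly the obstacle. The honest route is: I do not yet know $F$ is differentiable at $M_2$. Instead I use that $F$ is semiconcave-type regular enough that differentiability is detected by the superdifferential being a singleton, OR — cleaner — I use that $U$ (the function of interest) is semiconcave, so I may invoke: a semiconcave function is differentiable at a point precisely when its superdifferential there is a singleton, and superdifferentials transform covariantly under the isometries above. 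So I would argue: $D^+F(M_2) \ni \xi \iff \xi\circ G \in D^+F(M_2\circ G + Z)$ for all $G, Z$ (a direct computation from the definition of superdifferential, using the isometry property, no differentiability needed); hence $D^+F(M_1^{(n)})$ and $D^+F(M_2)$ are in isometric bijection. Since each $M_1^{(n)} \to M_1$ and $F$ is differentiable at $M_1$ with $D^+F$ upper semicontinuous for semiconcave $F$, for large $n$ the set $D^+F(M_1^{(n)})$ lies in a small ball around $\nabla F(M_1)$ — but to conclude it is a singleton I need more. The clean fix: $M_2 \sim M_1$ gives $M_2 = M_1\circ G + Z$ for a single pair $(G,Z)$ — this is true because $\dw(M_1,M_2)=0$ with both in $\ltord$ forces the infimum to be attained (the relevant set of rearrangements is weakly compact after passing to the Young-measure / doubly-stochastic relaxation, and Proposition \ref{Prp: GInvariant}'s isometry with $\Pp(\Tt^d)$ shows $M_1, M_2$ induce the same measure, which for $L^2$ random variables of the same law on $\Tt^d$... ).

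The main obstacle, then, is precisely whether "$M_1 \sim M_2$" upgrades from "$\dw = 0$" (a limit of rearrangements) to "$M_2 = M_1\circ G + Z$ for an actual $G\in\GG$, $Z\in\ltordz$." If it does, the lemma is immediate from the exact case above and I would cite the structure of $\SS^d$ and its isometry with $\Pp(\Tt^d)$ (Proposition \ref{Prp: GInvariant}) to get it. If it does not, I would instead run the semiconcavity argument: transform superdifferentials covariantly, use that $D^+F(M_1)$ is a singleton, and use continuity of $F$ together with the isometric identifications to force $D^+F(M_2)$ to be a singleton as well (a superdifferential of the constant-along-$\sim$ function on the compact quotient $\SS^d$), which by the remark after the definition of subdifferential yields differentiability at $M_2$ and the relation $\nabla F(M_1)\sim\nabla F(M_2)$. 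I expect the paper takes the first, cleaner route, so I would state and prove the one-pair representation first, then the exact transformation formula, then read off the conclusion.
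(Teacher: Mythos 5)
The paper offers no proof of this lemma at all: it simply cites P.-L. Lions' Coll\`ege de France lectures and Cardaliaguet's notes, where the statement is the structure theorem for derivatives of law-invariant functions lifted to $L^2$. Judged on its own, your attempt has a genuine gap, and it is exactly the one you flag. The ``exact case'' is fine apart from a harmless adjoint slip (if $M_2=M_1\circ G+Z$ then $\nabla F(M_2)=\nabla F(M_1)\circ G$, not $\nabla F(M_1)\circ G^{-1}$; either way the gradients are equivalent). But the proposed reduction to the exact case fails: $M_1\sim M_2$ only says $\inf_{G\in\GG,\,Z\in\ltordz}\|M_1-M_2\circ G-Z\|=0$, and this infimum need not be attained. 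Concretely, for $d=1$ take $M_1(z)=z$ and $M_2(z)=2z\bmod 1$: both push $\lambda_0$ to the uniform law on $\Tt^1$, so $M_1\sim M_2$ by Proposition \ref{Prp: GInvariant}, yet any $G\in\GG$, $Z\in\ltordz$ with $M_2=M_1\circ G+Z$ would force $G$ to agree a.e.\ with the two-to-one map $z\mapsto 2z\bmod 1$, contradicting bijectivity. So the ``one-pair representation'' you hope to extract from the isometry with $(\Pp(\Tt^d),W_2)$ is false, and the lemma cannot be obtained by applying the exact case to a single pair $(G,Z)$.

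Your fallback does not close the gap either. Semiconcavity is not a hypothesis of the lemma (it is stated for an arbitrary periodic, rearrangement-invariant $F$ with one differentiability point), and even granting it you concede the decisive step: upper semicontinuity of $D^{+}F$ along $M_1^{(n)}=M_2\circ G_n+Z_n\to M_1$ places $D^{+}F(M_1^{(n)})$ near $\nabla F(M_1)$, and your covariance identity identifies $D^{+}F(M_1^{(n)})$ with an isometric image of $D^{+}F(M_2)$, but the isometry changes with $n$, so this does not force $D^{+}F(M_2)$ to be a singleton, let alone produce the first-order expansion required for Fr\'echet differentiability. The missing ingredient, which is what the cited lectures supply, is a structure theorem for the gradient: from the exact case applied to all $G\in\GG$ with $M_1\circ G=M_1$ a.e.\ one gets $\nabla F(M_1)\circ G=\nabla F(M_1)$, whence $\nabla F(M_1)$ is (up to null sets) measurable with respect to $\sigma(M_1)$, i.e.\ $\nabla F(M_1)=g\circ M_1$ a.e.\ for a Borel $g:\Tt^d\to\RR^d$; one then proves directly, with a quantitative remainder estimate, that every $M_2$ with the same law is a differentiability point with $\nabla F(M_2)=g\circ M_2$, which yields $\nabla F(M_1)\sim\nabla F(M_2)$. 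Without this (or an equivalent device), passing to the limit in the exact-case identity runs into precisely the circularity you acknowledge, so the proposal as written does not prove the lemma.
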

\begin{proof}
The proof of this lemma can be found in lectures by P.-L. Lions on
mean-field games at Coll\`{e}ge de France \cite{lions_collegedefranse, cardaliaguetMFG'12}.
\end{proof}

Let $D \subset L^2(I; \RR ^d)$ be the differentiability set of the function $U$. Denote by $\mathcal{D}:=\{(M,\nabla U(M)); M \in D \}$ the graph of the gradient of $U$.

By Theorem \ref{Thm: MainU}, for every point $M \in D$, there exists a unique trajectory $\tx$ that minimizes \eqref{Eq: MinProblemforU}. Consider the adjoint variable $\tp(s)=-D_{v}L_c(\tx(s),\dot{\tx}(s)), s \geq 0$. The trajectory $(\tx(s),\tp(s)) \subset \mathcal{T}^* L^2(I; \RR ^d)$ solves
\begin{equation}\label{Eq: HJsystemforU}
\begin{cases}
\dot{\tx}=-D_{p} H_c(\tx,\tp),\\
\dot{\tp}=D_{x} H_c(\tx,\tp)
\end{cases}
\end{equation}
with initial data $(\tx(0),\tp(0))=(M, \nabla U(M))$.

On the other hand, if $(\tx,\tp)$ satisfies \eqref{Eq: HJsystemforU} with initial data $(\tx(0),\tp(0))=(M,\nabla U(M)) \in \DD$, then $\tx$ solves the Euler-Lagrange equation $D_{\tx}L_c(\tx,\dot{\tx})=\frac{d}{dt}D_{v}L_c(\tx,\dot{\tx})$ with initial conditions $\tx(0)=M,\ \dot{\tx}(0)=-D_p H_c(M,\nabla U(M))$. Therefore, by Theorem \ref{Thm: MainU}, $\tx$ is the unique minimizer in \eqref{Eq: MinProblemforU}.

Define $\DD _t \subset \mathcal{T}^* L^2(I; \RR ^d)$ as the set of all points $(M,P) \in \mathcal{T}^* L^2(I; \RR ^d)$ for which there exists a solution $(\tx(s),\tp(s))$ of \eqref{Eq: HJsystemforU} with initial data $(\tx(0),\tp(0)) \in \DD$, and $(\tx(t),\tp(t))=(M,P)$. In other words, $\DD _t$ is the image at time $t$ of the set $\DD$ under the Hamiltonian flow \eqref{Eq: HJsystemforU}.

Since $U$ is a value function, it is differentiable along the minimizing trajectory (Corollary 4.1, \cite{gomesnurbekyan'12a}). Therefore, $\DD _t \subset \DD$, for any $t>0$. Hence, since $\DD$ is a graph, $\DD _t$ is also a graph, for all $t>0$.  Moreover, $\DD _t \subset \DD _s$ for all $s<t$.

Let $D_t$ be the projection of the set $\DD _t$ onto the spatial component of the cotangent bundle $\mathcal{T}^* L^2(I; \RR ^d)$, that is, all points $M \in L^2(I; \RR ^d)$ such that $(M,\nabla U(M)) \in \DD _t$.
\begin{lemma}\label{Lemma: InvofD_t} If $M_1 \in D_t$ for some $t>0$, then $M_2 \in D_t$, for all $M_2 \sim M_1$.
\end{lemma}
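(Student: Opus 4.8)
The plan is to show that membership in $D_t$ is preserved under the equivalence relation $\sim$ by exploiting rearrangement invariance of $U$ and the uniqueness of the minimizing trajectory from Theorem \ref{Thm: MainU}, together with Lemma \ref{Lemma: DiffofPerInvfunction}. Recall that $M_1 \in D_t$ means $M_1 \in D$ (so $U$ is differentiable at $M_1$) and there is a solution $(\gt,\tilde{p})$ of \eqref{Eq: HJsystemforU} on $[0,t]$ with $(\gt(0),\tilde{p}(0)) \in \DD$, $\gt(t) = M_1$, $\tilde{p}(t) = \nabla U(M_1)$; equivalently, $M_1 = \tx(t)$ where $\tx$ is the unique minimizer in \eqref{Eq: MinProblemforU} issuing from $\gt(0) = N_1 \in D$. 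So given $M_2 \sim M_1$, I must produce a point $N_2 \in D$ whose minimizing trajectory reaches $M_2$ at time $t$.

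First I would take $G \in \GG$ and $Z \in \ltordz$ (or a limit of such, since $\sim$ is defined via an infimum; I address this below) so that $M_2$ is obtained from $M_1$ by the corresponding rearrangement-and-translation, and set $\ty(s) := \tx(s)\circ G + Z$ where $\tx$ is the minimizer from $N_1$ to $M_1$. By periodicity and rearrangement invariance of $L_c$ (assumptions i), ii), which pass to $L_c$ since the extra term $\int_I c\cdot N\,d\lambda_0$ is itself invariant), the action of $\ty$ over $[0,T]$ equals that of $\tx$, and $\dot{\ty}(s) = \dot{\tx}(s)\circ G$ keeps $\ty \in AC^2$. Since $U$ is periodic and rearrangement invariant, $U(\ty(T)) = U(\tx(T))$ for every $T$, so $\ty$ is itself a curve realizing the infimum in \eqref{Eq: MinProblemforU} with initial point $N_2 := N_1 \circ G + Z$. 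Because $N_1 \in D$, Lemma \ref{Lemma: DiffofPerInvfunction} gives $N_2 \sim N_1$ is also a differentiability point of $U$, i.e. $N_2 \in D$, hence $N_2 \in D$ and by uniqueness in Theorem \ref{Thm: MainU} the curve $\ty$ is \emph{the} minimizer from $N_2$. Its value at time $t$ is $\ty(t) = \tx(t)\circ G + Z = M_1 \circ G + Z = M_2$, so $M_2 \in D_t$, provided I also check $M_2 \in D$: this again follows from Lemma \ref{Lemma: DiffofPerInvfunction} since $M_1 \in D$ and $M_2 \sim M_1$, and moreover that lemma gives $\nabla U(M_2) \sim \nabla U(M_1)$, which is consistent with $\tilde p(t)\circ G + 0$ being the correct adjoint value along $\ty$.

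The main obstacle is the one subtlety that $\sim$ is defined by $\dw(M_1,M_2) = 0$, i.e. $M_2$ need not be \emph{exactly} of the form $M_1\circ G + Z$ but only an $L^2$-limit of such. To handle this I would either (a) invoke the fact that on $\SS^d \cong (\Pp(\Tt^d),W_2)$ equal equivalence classes correspond to equal laws, and argue that the whole construction — action, value of $U$, the Hamiltonian flow — descends to $\SS^d$ and depends only on the law, so "$=$ in $\SS^d$" suffices; or (b) approximate $M_2$ by $M_1\circ G_k + Z_k \to M_2$ in $L^2$, run the argument for each $k$ to get minimizers $\ty_k$ with $\ty_k(t) \to$ a curve reaching $M_2$, and pass to the limit using the compactness/continuity already established for the minimization problem (equilipschitz bounds, lower semicontinuity of the action, continuity of $U$), then use uniqueness at the differentiability point $M_2$ to identify the limit as the genuine minimizer. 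Option (a) is cleaner and I would present that, relegating the limiting argument to a remark. Everything else is a routine bookkeeping check that the invariances of $L_c$ and $U$ are compatible with composition by $G \in \GG$ and translation by $Z \in \ltordz$, which is immediate from assumptions i)--ii) and Proposition \ref{Prp: GInvariant}.
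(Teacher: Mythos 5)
Your treatment of the exact-rearrangement case ($M_2=M_1\circ G+Z$ with $G\in\GG$, $Z\in\ltordz$) is sound and is essentially the mechanism the paper uses: transport the optimal trajectory by $G$ and $Z$, use invariance of $L_c$ and $U$, Lemma \ref{Lemma: DiffofPerInvfunction}, and uniqueness from Theorem \ref{Thm: MainU}. The gap is in how you resolve the difficulty you yourself flag, namely that $M_2\sim M_1$ only means $\dw(M_1,M_2)=0$, so $M_2$ is merely an $L^2$-limit of points $M_n=M_1\circ G_n+Z_n$. Your preferred option (a) is not available: $U$ does descend to $\SS^d$ (Proposition \ref{Prp: GInvariant}), but the sets $D_t$, $\DD_t$ and the Hamiltonian flow \eqref{Eq: HJsystemforU} live on $\ltord$ and $\mathcal{T}^*L^2(I;\RR^d)$, and the claim that ``being the time-$t$ image under this flow of a point of $\DD$'' depends only on the equivalence class is exactly the content of the lemma; asserting that the flow ``descends to $\SS^d$'' assumes what is to be proved, and is precisely the kind of differential structure on the measure space that this paper deliberately avoids. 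So (a) cannot be the presented argument.

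Option (b) is the paper's actual route, but as sketched it omits the two ingredients that make it work, and substitutes tools that do not suffice. The paper does not pass to the limit in the minimization problem via lower semicontinuity of the action; instead it (i) uses continuity of the gradient of the semiconcave function $U$ on its differentiability set (Asplund) to get $\nabla U(M_n)=\nabla U(M_1)\circ G_n\to\nabla U(M_2)$ strongly, so the terminal data $(\ty_n(t),\tq_n(t))=(M_n,\nabla U(M_n))$ converge, and (ii) invokes stability for the ODE \eqref{Eq: HJsystemforU}, whose right-hand side is uniformly Lipschitz, to conclude that the transported trajectories $(\ty_n,\tq_n)=(\tx\circ G_n+Z_n,\ \tp\circ G_n)$ converge uniformly on $[0,t]$ to the solution $(\ty,\tq)$ with terminal data $(M_2,\nabla U(M_2))$. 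It is this backward stability that yields convergence of the initial points $\ty_n(0)=\tx(0)\circ G_n+Z_n\to\ty(0)$, hence $\ty(0)\sim\tx(0)$, so $\ty(0)\in D$ by Lemma \ref{Lemma: DiffofPerInvfunction}, and, again by gradient continuity, $\tq(0)=\nabla U(\ty(0))$; therefore $\ty$ is a minimizer issuing from a differentiability point and $(M_2,\nabla U(M_2))\in\DD_t$. Your sketch provides no mechanism for the convergence of the initial points (equilipschitz bounds, lower semicontinuity of the action, and continuity of $U$ do not control $\ty_k(0)$), and the appeal to ``uniqueness at the differentiability point $M_2$'' is aimed at the wrong end of the trajectory: what must be verified is that the limiting curve starts in $D$ with the correct adjoint, not that it is the minimizer emanating from $M_2$. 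Supplying (i) and (ii) turns your option (b) into the paper's proof.
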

\begin{proof} Fix $M_1 \in D_t$. Thus, $M_1$ is a differentiability point of  $U$ and $(M_1,\nabla U(M_1)) \in \DD _t$. Since $U$ is periodic and invariant under measure-preserving transformations, by Lemma \ref{Lemma: DiffofPerInvfunction}, we have that $U$ is also differentiable at $M_2$. Furthermore, there exist $G_n \in \mathcal{G}$ and $Z_n \in L_{\mathbb{Z}}^2(I; \RR ^d)$ such that $M_1 \circ G_n+Z_n \to M_2$ in the strong $L^2$ sense. Denote by $M_n:=M_1 \circ G_n+Z_n$. Recall that the gradient of a convex function is continuous at all points 
	where it is defined, see \cite{asplund'68}. Then, we have that $\nabla U(M_2)=\lim \limits_{n \rightarrow \infty}{\nabla U(M_n)}=\lim \limits_{n \rightarrow \infty}{\nabla U(M_1) \circ G_n}$.

Because $(M_1,\nabla U(M_1)) \in \DD _t$, there exists a minimizing trajectory $\tx$ such that $(\tx(0), \tp(0)) \in \DD$ and $(\tx(t),\tp(t))=(M_1,\nabla U(M_1))$. Consider the trajectories $\ty_n(s)=\tx(s) \circ G_n+Z_n$ and $\tq_n(s)= \tp(s) \circ G_n$. Due to the rearrangement invariance and periodicity of the Hamiltonian $H_c$, we have that $(\ty_n,\tq_n)$ solves \eqref{Eq: HJsystemforU} with terminal data $(\ty_n(t),\tq_n(t))=(M_n, \nabla U(M_n))$.

Let $(\ty, \tq) \in C^1 \left( [0,t]; \ltord \right)$ be the solution of \eqref{Eq: HJsystemforU} with terminal data $(\ty(t),\tq(t))=(M_2, \nabla U(M_2))$. Note that $(\ty, \tq)$ is well defined since the existence of the solution for all times is guaranteed by the fact that the right-hand side of \eqref{Eq: HJsystemforU} is uniformly Lipschitz in $(\tx,\tp)$.

The solutions $(\ty_n, \tq_n)$ to \eqref{Eq: HJsystemforU} have terminal data $(\ty_n(t),\tq_n(t))$. This data converges, in the strong sense, to the terminal data $(\ty(t),\tq(t))$ of another solution $(\ty,\tq)$ of the same system. Therefore, by the stability of ODEs we obtain that $\ty_n(s) \rightarrow \ty(s), \tq_n(s) \rightarrow \tq(s)$, uniformly in the interval $[0,t]$. But this means that $\ty_n(0)=\tx(0) \circ G_n +Z_n$ converges strongly to $\ty(0)$, hence $\ty(0) \sim \tx(0)$. Then, by Lemma \ref{Lemma: DiffofPerInvfunction}, $\ty(0)$ is also a differentiability point of the function $U$. By the continuity of the gradient, we have
$$\tq(0)=\lim_{n \rightarrow \infty}{\tq_n(0)}=\lim_{n \rightarrow \infty}{\nabla U(\ty_n(0))}=\nabla U(\ty(0)).$$
This means that $\ty$ is a minimizing trajectory starting at the differentiability point $\ty(0)$. Therefore, $(M_2, \nabla U(M_2))=(\ty(t),\tq(t)) \in \DD_t$ or, equivalently, $M_2 \in D_t$.
\end{proof}
\begin{lemma}\label{Lemma: clsrDD_t} For every $t>0$ one has that $\overline{D_t}=\bigcap \limits_{s<t} D_s$.
\end{lemma}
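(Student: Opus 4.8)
The plan is to prove the two inclusions $\overline{D_t}\subseteq\bigcap_{s<t}D_s$ and $\bigcap_{s<t}D_s\subseteq\overline{D_t}$ separately. For the first inclusion, recall that we already know $D_t\subseteq D_s$ for $s<t$ (this follows from $\DD_t\subseteq\DD_s$, which was established just before the lemma), so it suffices to show that each $D_s$ is closed; then $\overline{D_t}\subseteq\overline{D_s}=D_s$ for every $s<t$, and intersecting over $s<t$ gives the claim. To see that $D_s$ is closed, take $M_n\in D_s$ with $M_n\to M$ strongly in $\ltord$. Each $M_n$ is a differentiability point of $U$ with $(M_n,\nabla U(M_n))\in\DD_s$, so there is a solution $(\tx_n,\tp_n)$ of \eqref{Eq: HJsystemforU} on $[0,s]$ with $(\tx_n(0),\tp_n(0))\in\DD$ and $(\tx_n(s),\tp_n(s))=(M_n,\nabla U(M_n))$. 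Since $U$ is semiconcave and Lipschitz (Proposition \ref{Prp: USemiconcavity}), the superdifferentials $D^+U(M_n)$ are uniformly bounded, hence the terminal momenta $\nabla U(M_n)=\tp_n(s)$ lie in a bounded set; passing to a subsequence we may assume $\tp_n(s)\rightharpoonup P$ weakly. By the semiconcavity/upper-semicontinuity of the superdifferential of $U$ together with differentiability of $U$ along minimizers, one identifies $P=\nabla U(M)$ and concludes $M$ is a differentiability point. Then by the stability of the ODE \eqref{Eq: HJsystemforU} — whose right-hand side is uniformly Lipschitz, as noted in the proof of Lemma \ref{Lemma: InvofD_t} — the solutions $(\tx_n,\tp_n)$ converge uniformly on $[0,s]$ to the solution $(\tx,\tp)$ with terminal data $(M,\nabla U(M))$, and $(\tx(0),\tp(0))=\lim(\tx_n(0),\tp_n(0))$ lies in $\overline{\DD}$. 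The remaining point — that this limit point actually lies in $\DD$, i.e. that $\tx(0)$ is a differentiability point of $U$ — follows because $\tx$ is then a minimizing curve for \eqref{Eq: MinProblemforU} and $U$ is differentiable along minimizers (Corollary 4.1 of \cite{gomesnurbekyan'12a}), giving $(M,\nabla U(M))\in\DD_s$, i.e. $M\in D_s$.

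For the reverse inclusion $\bigcap_{s<t}D_s\subseteq\overline{D_t}$, fix $M\in D_s$ for every $s<t$. Choose $s_n\uparrow t$. For each $n$, since $M\in D_{s_n}$, there is a solution $(\tx_n,\tp_n)$ of \eqref{Eq: HJsystemforU} with $(\tx_n(0),\tp_n(0))\in\DD$ and $(\tx_n(s_n),\tp_n(s_n))=(M,\nabla U(M))$. Run each of these solutions forward to time $t$ (the flow is globally defined by the uniform Lipschitz bound), obtaining points $(\tx_n(t),\tp_n(t))\in\DD_t$, hence $\tx_n(t)\in D_t$. The idea is that as $s_n\to t$ the extra time interval $[s_n,t]$ shrinks to zero, so $\tx_n(t)\to M$: indeed, starting all these trajectories from the common point $(M,\nabla U(M))$ and flowing for the short time $t-s_n\to 0$, uniform continuity of the flow near $(M,\nabla U(M))$ forces $(\tx_n(t),\tp_n(t))\to(M,\nabla U(M))$ in the strong topology. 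Consequently $M\in\overline{D_t}$.

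The main obstacle I expect is the closedness argument for $D_s$ — specifically, controlling the limit of $\nabla U(M_n)$ and showing that the strong limit $M$ remains a differentiability point of $U$ with the correct gradient. The space $\ltord$ is not locally compact, so one cannot simply extract a strongly convergent subsequence of momenta for free; one must use semiconcavity of $U$ (uniform bounds and upper-semicontinuity of $D^+U$), the strong continuity of $\nabla U$ on its domain of definition for convex-type functions \cite{asplund'68}, and the key structural fact that $U$ is differentiable along minimizing trajectories. Once differentiability of the limit and boundedness of the momenta are in hand, the ODE stability and the monotonicity $\DD_t\subseteq\DD_s$ do the rest routinely. The reverse inclusion is comparatively soft, relying only on the global well-posedness and continuous dependence of the Hamiltonian flow over a vanishing time interval.
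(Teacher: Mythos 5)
Your second inclusion ($\bigcap_{s<t}D_s\subseteq\overline{D_t}$) is essentially the paper's argument and is fine. The first inclusion, however, contains a genuine gap: you reduce it to the claim that each $D_s$ is \emph{closed}, and the step meant to establish this fails. After extracting the limit trajectory $(\tx,\tp)$ with $(\tx(s),\tp(s))=(M,\nabla U(M))$, you assert that $\tx(0)$ must be a differentiability point of $U$ ``because $\tx$ is a minimizing curve and $U$ is differentiable along minimizers.'' But differentiability of the value function along a minimizer is only guaranteed at strictly positive times along the trajectory, not at its initial point: the limit of minimizers is again a minimizer, yet its starting point $\tx(0)=\lim\tx_n(0)$ is only known to lie in $\overline{\DD}$ projected to the base, and $\DD$ (the graph of $\nabla U$ over the $G_\delta$ differentiability set) is not closed. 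This is precisely why the lemma is stated as $\overline{D_t}=\bigcap_{s<t}D_s$ rather than as closedness of $D_t$; indeed, if every $D_s$ were closed, then combining with the monotonicity $D_t\subseteq D_s$ and the reverse inclusion you would obtain the stronger identity $D_t=\bigcap_{s<t}D_s$, which the paper does not claim. The paper's own proof is careful on exactly this point: it concludes only that $U$ is differentiable at all points of the limiting minimizer ``except, possibly, at the starting point,'' and therefore obtains $(M,\nabla U(M))\in\DD_s$ only for $s<t$, by starting the Hamiltonian flow \eqref{Eq: HJsystemforU} from the interior point $(\tx(t-s),\tp(t-s))\in\DD$ and flowing for time $s$.

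The repair is to drop the closedness claim and run your own compactness/stability machinery directly on $M\in\overline{D_t}$: show (as you sketch, but using the uniform quadratic lower bound $U(M_n+H)\geq U(M_n)+\langle\nabla U(M_n),H\rangle-C\|H\|^2$ coming from the fact that $M_n$ is the endpoint of a minimizer with $U$ as value function and assumption viii), not merely upper semicontinuity of $D^+U$) that $U$ is differentiable at $M$ with $\nabla U(M_n)\to\nabla U(M)$ strongly; pass to the limit in the calibration identity to see the limit curve is a minimizer on $[0,t]$; and then conclude $M\in D_s$ for every $s<t$ by taking the interior point $\tx(t-s)$ as the starting datum in $\DD$. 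With that modification your argument coincides with the paper's.
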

\begin{proof} Suppose $M \in \overline{D_t}$. Then, there exist points $M_n \in D_t$ such that $M_n \rightarrow M$.
Because $M_n \in D_t$, $U$ is differentiable at $M_n$ and $(M_n,\nabla U(M_n)) \in \DD _t$. Furthermore, there exist minimizing trajectories $\tx_n$ such that $(\tx_n(t),\tp_n(t))=(M_n,\nabla U(M_n))$ and $(\tx_n(0),\tp_n(0)) \in \DD$. Since $\tx _n$ is a minimizer, the Lagrangian $L$ satisfies the assumption viii), and $U$ is a terminal cost function (as well as a value function), there exists a constant $C$ depending on time $t$ such that
\begin{equation}\label{Eq: SemConcatM_n}
U(M_n+H) \geq U(M_n)+\langle \nabla U(M_n), H \rangle - C \|H\|^2,
\end{equation}
for all $H \in L^2(I; \RR ^d)$ and all $n$. Because $U$ is Lipschitz, the sequence $\{\nabla U(M_n)\}$ is bounded. 
Consequently, it has a weakly convergent subsequence with a limit $P \in L^2(I; \RR ^d)$. Hence, by passing to the limit in \eqref{Eq: SemConcatM_n}, we obtain that
$$U(M+H) \geq U(M)+\langle  P, H \rangle - C \|H\|^2,$$
for all $H \in L^2(I; \RR ^d)$. Therefore, $P$ belongs to the subdifferential $D^{-}U(M)$. Due to the semiconcavity of $U$, the superdifferential $D^{+}U(M)$ is non-empty. Consequently, $U$ is differentiable at $M$ and $P=\nabla U(M)$ is its gradient.

By the continuity of the gradient, we have that $\nabla U(M_n) \rightarrow \nabla U(M)$ in the strong $L^2$ sense. Next, solve \eqref{Eq: HJsystemforU} with terminal data $\tx(t)=M, \tp(t)=\nabla U(M)$. We have that $(\tx_n(t),\tp_n(t)) \to (\tx(t),\tp(t))$. Hence, $\tx_n(s) \rightarrow \tx(s), \tp_n(s) \rightarrow \tp(s)$, uniformly in the interval $[0,t]$ by the stability of solutions of ODEs. Furthermore,
\begin{equation*} U(\tx_n(0))=\int_0^t{L_c(\tx_n,\dot{\tx}_n) +\bar{H}(c) ds}+U(\tx_n(t)).
\end{equation*}
Thus, by passing to the limit, we obtain
\begin{equation*} U(\tx(0))=\int_0^t{L_c(\tx,\dot{\tx})+\bar{H}(c) ds}+U(\tx(t)).
\end{equation*}
Consequently, $\tx$ is a minimizing trajectory. Since $U$ is differentiable at all points of the minimizing trajectory,  except, possibly, at  the starting point, we obtain that $(M,\nabla U(M)) \in \DD _s$, for all $s<t$. Therefore, $M \in \bigcap \limits_{s<t} D_s$, so $\overline{D_t} \subset \bigcap \limits_{s<t} D_s$.

Now, we claim that $\bigcap \limits_{s<t} D_s \subset \overline{D_t}$. Suppose $M \in \bigcap \limits_{s<t} D_s$ or, equivalently, $(M,\nabla U(M)) \in \bigcap \limits_{s<t} \DD_s$. Then, for every $0<s<t$ there exists a minimizing trajectory $\tx_s(\tau)$ such that $(\tx_s(s),\tp_s(s))=(M,\nabla U(M))$.
Let
$(\tx,\tp)$ be the solution of \eqref{Eq: HJsystemforU} with data $(\tx(t),\tp(t))=(M,\nabla U(M))$.
By the uniqueness of the solution to \eqref{Eq: HJsystemforU}, we have that $(\tx_s(\tau),\tp_s(\tau))=(\tx(\tau+t-s),\tp(\tau+t-s))$, for any $s<t$. Therefore, the trajectory $\tx$ is a minimizer for any starting point $(\tx(t-s),\tp(t-s))$. Consequently, $(\tx(2t-s),\tp(2t-s)) \in \DD _t$. So $\tx(2t-s) \in D_t$, but $\tx(t)=\lim_{s \rightarrow t}{\tx(2t-s)}$. Hence, $M=\tx(t) \in \overline{D_t}$.
\end{proof}
\begin{corollary}\label{crl: InvofclsrD_t} If $M_1 \in \overline{D_t}$ and $M_2 \sim M_1$, then $M_2 \in \overline{D_t}$.
\end{corollary}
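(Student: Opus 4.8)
The plan is to combine the two lemmas immediately preceding the statement, namely Lemma \ref{Lemma: InvofD_t} (which gives invariance of each $D_t$ under $\sim$) and Lemma \ref{Lemma: clsrDD_t} (which identifies $\overline{D_t}$ with $\bigcap_{s<t} D_s$). Since the latter exhibits $\overline{D_t}$ as an intersection of sets each of which is closed under $\sim$, the closure $\overline{D_t}$ inherits the same invariance, and there is essentially nothing left to do.

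Concretely, I would argue as follows. Suppose $M_1 \in \overline{D_t}$ and $M_2 \sim M_1$. By Lemma \ref{Lemma: clsrDD_t}, $\overline{D_t} = \bigcap_{s<t} D_s$, so $M_1 \in D_s$ for every $s$ with $0 < s < t$. Fix such an $s$. Applying Lemma \ref{Lemma: InvofD_t} with the parameter $s$ in place of $t$, and using that $M_2 \sim M_1$, we conclude $M_2 \in D_s$. Since $s \in (0,t)$ was arbitrary, $M_2 \in \bigcap_{s<t} D_s = \overline{D_t}$, again by Lemma \ref{Lemma: clsrDD_t}. This completes the proof.

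There is no real obstacle here: the statement is a formal consequence of the two lemmas, and the only thing to be careful about is that Lemma \ref{Lemma: InvofD_t} is stated for an arbitrary $t > 0$, so it legitimately applies to each $D_s$ with $s \in (0,t)$. One should also note that the equivalence relation $\sim$ does not depend on the time parameter, so the same $M_2$ works simultaneously for all $s$. If one wished to avoid invoking Lemma \ref{Lemma: clsrDD_t}, an alternative would be to take a sequence $M_1^{(n)} \in D_t$ with $M_1^{(n)} \to M_1$ in $L^2$, transport each by the measure-preserving maps and integer translations that carry $M_1$ to $M_2$ (as in the proof of Lemma \ref{Lemma: InvofD_t}), and pass to the limit; but this is strictly more work than the one-line deduction above, so I would present the short argument.
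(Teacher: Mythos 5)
Your argument is correct and is exactly the route the paper takes: its proof simply cites Lemma \ref{Lemma: InvofD_t} and Lemma \ref{Lemma: clsrDD_t}, and your write-up just spells out the deduction (apply the invariance of each $D_s$, $s<t$, and then use $\overline{D_t}=\bigcap_{s<t}D_s$). No gaps; nothing further is needed.
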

\begin{proof}
The proof follows from Lemmas \ref{Lemma: InvofD_t} and \ref{Lemma: clsrDD_t}.
\end{proof}

\begin{corollary} For every $t>0$ one has that $\overline{\DD _t} = \bigcap \limits_{s<t} \DD _s$.
\end{corollary}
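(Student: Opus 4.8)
The goal is to prove that $\overline{\DD_t} = \bigcap_{s<t} \DD_s$ in the cotangent bundle, given that the analogous statement $\overline{D_t} = \bigcap_{s<t} D_s$ holds on the base (Lemma 4.3). The key structural fact is that both $\DD_t$ and $\DD_s$ are graphs over their projections $D_t$ and $D_s$ respectively — this was established just before Lemma 4.2 from the fact that $U$ is differentiable along minimizing trajectories. So the map $M \mapsto (M, \nabla U(M))$ is a bijection between $D_s$ and $\DD_s$, and between $D_t$ and $\DD_t$.

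The plan is to transfer the base-level identity through this graph map, being careful about how closure interacts with it. First I would show $\bigcap_{s<t}\DD_s \subset \overline{\DD_t}$: take $(M,P) \in \bigcap_{s<t}\DD_s$; since each $\DD_s$ is a graph over $D_s$ we have $P = \nabla U(M)$ and $M \in \bigcap_{s<t} D_s = \overline{D_t}$, so there are $M_n \in D_t$ with $M_n \to M$; then $(M_n, \nabla U(M_n)) \in \DD_t$, and by continuity of the gradient of the convex function $U$ at differentiability points (as used repeatedly, e.g. in Lemma 4.3), $\nabla U(M_n) \to \nabla U(M) = P$ strongly, so $(M_n,\nabla U(M_n)) \to (M,P)$, giving $(M,P) \in \overline{\DD_t}$.

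For the reverse inclusion $\overline{\DD_t} \subset \bigcap_{s<t}\DD_s$: take $(M,P) \in \overline{\DD_t}$, so there are $(M_n, \nabla U(M_n)) \in \DD_t$ with $M_n \to M$ and $\nabla U(M_n) \to P$. Then $M \in \overline{D_t} = \bigcap_{s<t} D_s$, and by exactly the argument in the first half of the proof of Lemma 4.3 (passing to the limit in the semiconcavity inequality \eqref{Eq: SemConcatM_n} to get $P \in D^-U(M)$, then using semiconcavity to force differentiability), $U$ is differentiable at $M$ with $\nabla U(M) = P$. Since $M \in D_s$ for all $s < t$ and $\DD_s$ is the graph of $\nabla U$ over $D_s$, we get $(M, P) = (M, \nabla U(M)) \in \DD_s$ for all $s<t$, hence $(M,P) \in \bigcap_{s<t}\DD_s$.

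The main obstacle — and the only place real content is needed beyond Lemma 4.3 — is justifying that in the limit the momentum component $P$ is genuinely $\nabla U(M)$ rather than merely some subdifferential, i.e. that differentiability is preserved under the limit; but this is precisely the semiconcavity argument already carried out inside Lemma 4.3, so it can be invoked essentially verbatim. Everything else is bookkeeping with the graph property and strong continuity of the gradient. Thus the proof is short:
\begin{proof}
Since $\DD_s$ is the graph of $\nabla U$ over $D_s$ for every $s>0$, this follows from Lemma \ref{Lemma: clsrDD_t} together with the strong continuity of $\nabla U$ at differentiability points and the semiconcavity argument in the proof of Lemma \ref{Lemma: clsrDD_t}: if $(M,P)\in\bigcap_{s<t}\DD_s$ then $P=\nabla U(M)$ and $M\in\bigcap_{s<t}D_s=\overline{D_t}$, so approximating $M$ by $M_n\in D_t$ and using $\nabla U(M_n)\to\nabla U(M)$ gives $(M,P)\in\overline{\DD_t}$; conversely if $(M_n,\nabla U(M_n))\to(M,P)$ with $M_n\in D_t$, then $M\in\overline{D_t}=\bigcap_{s<t}D_s$, $U$ is differentiable at $M$ with $\nabla U(M)=P$ by the semiconcavity argument in the proof of Lemma \ref{Lemma: clsrDD_t}, and hence $(M,P)=(M,\nabla U(M))\in\DD_s$ for all $s<t$.
\end{proof}
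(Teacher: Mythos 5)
Your proof is correct and follows essentially the same route as the paper: both directions are reduced to Lemma \ref{Lemma: clsrDD_t} via the graph property of $\DD_s$, using strong continuity of $\nabla U$ on the differentiability set for the inclusion $\bigcap_{s<t}\DD_s \subset \overline{\DD_t}$, and the semiconcavity argument to identify the limiting momentum with $\nabla U(M)$ for the reverse inclusion. If anything, you make explicit a step the paper leaves implicit (that $\overline{\DD_t}$ is still a graph of $\nabla U$), which is a welcome clarification rather than a deviation.
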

\begin{proof} For any $(M,P) \in \overline{\DD _t}$, we have that $P= \nabla U(M)$ and $M \in \overline{D_t}$. Thus, by Lemma \ref{Lemma: clsrDD_t}, $M \in \bigcap \limits_{s<t}D _s$. Hence $(M,P)=(M,\nabla U(M)) \in \bigcap \limits_{s<t} \DD _s$.

On the other hand, if $(M,P) \in \bigcap \limits_{s<t} \DD _s$, then $P=\nabla U(M)$ and $M \in \bigcap \limits_{s<t}D _s$. Therefore, from the Lemma \ref{Lemma: clsrDD_t}, we have that $M \in \overline{D_t}$. Consequently, there exist $M_n \in D_t$ such that $M_n \rightarrow M$. Because $M_n \in D_t$, the function $U$ is differentiable at $M_n$ and $(M_n, \nabla U(M_n)) \in \DD _t$. By the continuity of the gradient, $\nabla U(M_n) \rightarrow \nabla U(M)$ in the strong $L^2$ sense. Then $(M_n,\nabla U(M_n)) \rightarrow (M,\nabla U(M))$ and $(M,P)=(M,\nabla U(M)) \in \overline{\DD _t}$.
\end{proof}

\begin{lemma}\label{lma: Existenceof2sidedMins} The set $\bigcap \limits_{t>0}\DD _t$ is non-empty.
\end{lemma}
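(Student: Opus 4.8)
Here the plan is to offset the failure of local compactness of $\ltord$ by projecting onto the compact space $\SS^d$ and then using the rearrangement invariance of the sets $D_t$ from Lemma \ref{Lemma: InvofD_t} to climb back. Since $\DD_t\subset\DD_s$ and $D_t\subset D_s$ whenever $s<t$, it suffices to produce a single point of $\bigcap_{n\in\mathbb N}\DD_n$, equivalently of $\bigcap_{n\in\mathbb N}D_n$; without the invariance there would be no abstract reason for such an intersection of nested closed subsets of $\ltord$ to be non-empty, so this is the heart of the matter.

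To build a candidate, fix $M_0\in D$, which is non-empty because $U$ is differentiable on a dense $G_\delta$ set (Proposition \ref{Prp: USemiconcavity}); let $\tx^*\in C^1([0,\infty);\ltord)$ be the trajectory given by Theorem \ref{Thm: MainU} with $\tx^*(0)=M_0$, and put $\tp^*(s)=-D_vL_c(\tx^*(s),\dot{\tx}^*(s))$. Then $(\tx^*,\tp^*)$ solves \eqref{Eq: HJsystemforU} with $(\tx^*(0),\tp^*(0))=(M_0,\nabla U(M_0))\in\DD$, so by the very definition of $\DD_t$ we get $(\tx^*(n),\tp^*(n))\in\DD_n$, hence $\tx^*(n)\in D_n$, for every $n\in\mathbb N$. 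The sequence $\{\pi(\tx^*(n))\}$ lies in the compact metric space $(\SS^d,\text{dist}_{\SS})$, so along a subsequence $\pi(\tx^*(n_k))\to m$; fix a representative $\bar M$ of $m$. From the definition of $\dw$, together with the fact that right-composition with an element of $\GG$ is an isometry of $\ltord$ (and carries $\ltordz$ into itself), one can choose $G_k\in\GG$ and $Z_k\in\ltordz$ with $Y_k:=\tx^*(n_k)\circ G_k+Z_k\to\bar M$ strongly in $\ltord$.

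Now comes the main step. Since $Y_k\sim\tx^*(n_k)\in D_{n_k}$, Lemma \ref{Lemma: InvofD_t} gives $Y_k\in D_{n_k}$. Fix $s>0$: for every $k$ with $n_k>s$ we have $Y_k\in D_{n_k}\subset D_s$, and letting $k\to\infty$ yields $\bar M\in\overline{D_s}$. As $s>0$ was arbitrary, $\bar M\in\bigcap_{s>0}\overline{D_s}$, and by Lemma \ref{Lemma: clsrDD_t} this set equals $\bigcap_{r>0}D_r$; hence $\bar M\in D_r$ for every $r>0$. By the definition of $D_r$ this means that $U$ is differentiable at $\bar M$ and $(\bar M,\nabla U(\bar M))\in\DD_r$ for all $r>0$, so $(\bar M,\nabla U(\bar M))\in\bigcap_{t>0}\DD_t$, and this set is non-empty.

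The only genuine obstacle is the passage from $\SS^d$ back to $\ltord$: the convergence $\pi(\tx^*(n_k))\to m$ delivers merely $\dw(\tx^*(n_k),\bar M)\to 0$, not strong convergence of $\tx^*(n_k)$ itself, which is precisely where one would want local compactness and does not have it. This is circumvented by replacing each $\tx^*(n_k)$ with the equivalent representative $Y_k$ and using that each $D_t$ is a union of $\sim$-equivalence classes; everything else is routine bookkeeping with the nesting and with Lemmas \ref{Lemma: InvofD_t}--\ref{Lemma: clsrDD_t}.
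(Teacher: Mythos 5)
Your proof is correct and follows essentially the same route as the paper: both arguments rest on the compactness of $\SS^d$, the $\sim$-saturation of the sets $D_t$ (Lemma \ref{Lemma: InvofD_t}), and the closure identity of Lemma \ref{Lemma: clsrDD_t}. The only cosmetic difference is that you extract a convergent subsequence from the projection of a single minimizing orbit issued from a differentiability point, whereas the paper applies the nested-compact-sets intersection property to the closed, saturated projections $\pi(\overline{D_t})$ in $\SS^d$ and then pulls back via $\pi^{-1}$.
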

\begin{proof} Since all the sets $\DD _t$ are graphs, the statement in the lemma is equivalent to
\begin{equation*}\label{dinfty}
	D_{\infty}=\bigcap \limits_{t>0}D_t \neq \emptyset.
\end{equation*}	
From Lemma \ref{Lemma: clsrDD_t}, we obtain that $D_{\infty}=\bigcap \limits_{t>0}\overline{D_t}$. The sets $\{\overline{D_t}\}_{t>0}$ are closed nested sets. Consider projections of $A_t$ onto $\SS^d$ through the projection operator $\pi$ (see \eqref{proj}).

Due to Corollary \ref{crl: InvofclsrD_t}, the sets $\overline{D_t}$ contain only full equivalence classes with respect to the equivalence relation $\sim$. Since the sets $\overline{D_t}$ are closed, the sets $A_t$ are also closed. Additionally, they are compact, because $\SS^d$ is compact. Accordingly, they have a non-empty compact intersection
\begin{equation*}\label{Ainfty}
	A_{\infty}=\bigcap \limits_{t>0}A_t.
\end{equation*}

Therefore,
\begin{equation*}\label{dinfty_ainfty}
	D_{\infty}= \pi ^{-1}(A_{\infty}) \neq \emptyset.
\end{equation*}
\end{proof}

%{\color{red}

Now, we have all the prerequisites to prove our next main result, 
Theorem \ref{t1p3}.

%\begin{theorem}\label{Thm: Existenceof2sidedMins}
%There exists an infinite-dimensional subset of $\ltord$ that is invariant %under the dynamics \eqref{eq: InfHmltForm}.
%\end{theorem}
\begin{proof}[Proof of Theorem \ref{t1p3}]
Consider the set
\begin{equation*}\label{Omega}
	\Omega=\left\{\left(M,-D_pH_c(M,P)\right)\ ;\ (M,P) \in  \bigcap \limits_{t>0}\DD _t\right\} \subset \mathcal{T}\ltord
\end{equation*}
Since $\bigcap \limits_{t>0}\DD _t \subset \mathcal{T}^*\ltord$ is invariant under the Hamiltonian flow, $\Omega$ is invariant under the Euler-Lagrange flow.

The projection operator $\pi$ is continuous, and the set $A_{\infty}$ is compact and hence closed. Therefore, the set $D_{\infty}$ is closed. Since the gradient of $U$ is continuous on the set of differentiability, we obtain that $\bigcap \limits_{t>0}\DD _t$ and $\Omega$ are also closed.
\end{proof}

From Lemma \ref{lma: Existenceof2sidedMins}, it is straightforward to prove the existence of two-sided minimizers of the Lax-Oleinik semigroup.
%\begin{corollary}\label{crl: twosidedmins}
%There exist trajectories $\tx \in AC^2(\RR, \ltord)$ such that \eqref{eq: twosidedmins} holds.
%\end{corollary}
\begin{proof}[Proof of the Corollary \ref{cortwosidedmin}] For every point $M \in D_{\infty}$, consider the minimizing trajectory $\tx$ that passes through $M$.
\end{proof}
 The existence of \textit{weakly invariant} minimizing measures on the tangent space of $\Pp(\Tt^d)$ was shown in \cite{tudgangbo'12}. Subsequently, 
 in \cite{bessi'13}, the author established the existence of \textit{invariant} (or \textit{strongly invariant}) minimizing measures on $\mathcal{T}L^2\left([0,1]\right)$. 
Here, we settle the remaining question, namely the existence of strongly invariant measures in $\mathcal{T}\ltord$ for all $d\geq 1$.
For that, 
we extend the methods introduced by Fathi in \cite{F} to the infinite-dimensional setting.

Let $\mathfrak{A}$ be the Borel $\sigma$-algebra of the subsets of $A_{\infty}$. 
Define
\begin{equation*}\label{Dsigmalgebra}
	\mathfrak{D}=\pi^{-1}(\mathfrak{A})=\{\pi^{-1}(A)\ ; \ A\subset A_{\infty}\}.
\end{equation*}
Note that $\mathfrak{D}$ is a $\sigma$-algebra of subsets of $D_{\infty}$. For every $C \in \mathfrak{D}$, set
\[B_C=\left\{(M,-D_pH_c(M,\nabla U(M)))\ ;\ M \in C \right\}
\]
and consider
\begin{equation}\label{Bsigmalgebra}
	\mathfrak{B}=\left\{B_C\ ;\ C \in \mathfrak{D}\right\}.
\end{equation}
$\mathfrak{B}$ is a $\sigma$-algebra of subsets of $\Omega$.
\begin{lemma}[Riesz Representation Theorem and Compactness]\label{Riezs}
\begin{itemize} We have that
	\item[i.] for every linear bounded functional $Q$ acting on the set of continuous rearrangement-invariant functions $F:\Omega \to \RR$, there exists a  measure $\mu$ on $(\Omega, \mathfrak{B})$ such that
	\begin{equation*}
	Q(F)=\int\limits_{\Omega}Fd\mu;
	\end{equation*}
	\item[ii.] the space of probability measures on $(\Omega, \mathfrak{B})$ 
 	 is a narrowly compact space.
\end{itemize}

\end{lemma}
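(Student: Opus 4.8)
The plan is to transport both statements from the base space $A_\infty \subset \SS^d$, where everything is classical, via the projection $\pi$ and the identification of $\Omega$ with $D_\infty = \pi^{-1}(A_\infty)$ through the graph map $M \mapsto (M, -D_pH_c(M,\nabla U(M)))$. The crucial structural fact, already available, is that $A_\infty$ is a compact metric space (Lemma \ref{lma: Existenceof2sidedMins} together with the compactness of $\SS^d$), that $\mathfrak{A}$ is its Borel $\sigma$-algebra, and that $\mathfrak{D}$ and $\mathfrak{B}$ are the pullbacks of $\mathfrak{A}$ along $\pi$ and along the graph map composed with $\pi$. Consequently a continuous rearrangement-invariant function $F : \Omega \to \RR$ descends to a genuine continuous function $\bar F : A_\infty \to \RR$: indeed, by Corollary \ref{crl: InvofclsrD_t} the set $D_\infty$ is a union of full $\sim$-equivalence classes, the gradient $\nabla U$ is continuous and, by Lemma \ref{Lemma: DiffofPerInvfunction}, respects $\sim$, so $F$ is constant on the fibers of $\pi\big|_{D_\infty}$ and factors as $F = \bar F \circ \pi$ with $\bar F \in C(A_\infty)$. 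This correspondence $F \leftrightarrow \bar F$ is a linear isometric bijection between the continuous rearrangement-invariant functions on $\Omega$ and $C(A_\infty)$.

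For part (i), given a bounded linear functional $Q$ on the continuous rearrangement-invariant functions on $\Omega$, define $\bar Q(\bar F) := Q(F)$ on $C(A_\infty)$; this is a bounded linear functional on $C(A_\infty)$, so by the classical Riesz representation theorem there is a (signed Radon) measure $\bar\mu$ on $(A_\infty, \mathfrak{A})$ with $\bar Q(\bar F) = \int_{A_\infty} \bar F \, d\bar\mu$. Now push $\bar\mu$ forward: set $\mu := (\iota \circ \pi^{-1})_\sharp \bar\mu$ on $(\Omega, \mathfrak{B})$, where $\iota$ denotes the graph embedding — more precisely, for $B_C \in \mathfrak{B}$ with $C = \pi^{-1}(A)$, $A \in \mathfrak{A}$, put $\mu[B_C] := \bar\mu[A]$, which is well defined because $\mathfrak{B} \to \mathfrak{A}$, $B_C \mapsto A$ is a bijection by construction. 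Then $\int_\Omega F \, d\mu = \int_{A_\infty} \bar F \, d\bar\mu = Q(F)$, as required. For part (ii), the space of probability measures on $(\Omega, \mathfrak{B})$ is, via the same bijection $\mu \leftrightarrow \bar\mu$, identified with the space of probability measures on the compact metric space $A_\infty$; narrow (weak-$*$) convergence of $\mu_n$ tested against continuous rearrangement-invariant functions on $\Omega$ corresponds exactly to narrow convergence of $\bar\mu_n$ against $C(A_\infty)$, and $\mathcal{P}(A_\infty)$ is narrowly compact by the Banach--Alaoglu theorem (equivalently Prokhorov's theorem, $A_\infty$ being compact). Hence $\mathcal{P}(\Omega, \mathfrak{B})$ is narrowly compact.

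The main obstacle — and the point that needs care rather than cleverness — is the bookkeeping around the three-way identification $\Omega \cong D_\infty \cong A_\infty$: one must check that the graph map $M \mapsto (M, -D_pH_c(M, \nabla U(M)))$ is a well-defined homeomorphism from $D_\infty$ onto $\Omega$ (continuity of $\nabla U$ on the differentiability set and of $D_pH_c$), that $\pi\big|_{D_\infty} : D_\infty \to A_\infty$ is a continuous surjection whose fibers are exactly the $\sim$-classes, and that under these maps the $\sigma$-algebras $\mathfrak{B}$, $\mathfrak{D}$, $\mathfrak{A}$ correspond bijectively, so that "measure on $(\Omega,\mathfrak{B})$" really is just "Borel measure on $A_\infty$" in disguise. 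Once this dictionary is in place, both assertions are immediate consequences of the classical Riesz representation theorem and of Prokhorov's theorem on the compact metric space $A_\infty$; no genuinely infinite-dimensional difficulty survives, precisely because $A_\infty$ lives inside the compact space $\SS^d \cong \mathcal{P}(\Tt^d)$.
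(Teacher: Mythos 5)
Your proposal is correct and follows essentially the same route as the paper: identify continuous rearrangement-invariant functions on $\Omega$ with $C(A_\infty)$ through $\pi$ and the graph map, apply the classical Riesz representation theorem and narrow compactness on the compact metric space $A_\infty$, and transfer the resulting measures back to $(\Omega,\mathfrak{B})$ via the bijection $B_C \mapsto \pi(C)$ between $\mathfrak{B}$ and $\mathfrak{A}$. Your explicit verification that $F$ factors through $\pi$ (using Corollary \ref{crl: InvofclsrD_t} and Lemma \ref{Lemma: DiffofPerInvfunction}) makes precise a point the paper leaves implicit, but the argument is the same.
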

\begin{proof}
	\begin{itemize}
		\item[i.] Let $f:A_{\infty}\to \RR$ be a continuous function. For $(M,N) \in \Omega$, let $F_f(M,N)=F_f(M,-D_pH_c(M,\nabla U(M)))=f(\pi(M))$. Then, $F_f$ is rearrangement invariant and continuous. Define
		\begin{equation*}
		q(f)=Q(F_f). 
		\end{equation*}
		$q$ is a linear bounded functional acting on continuous functions $f:A_{\infty}\to \RR$. Since $A_{\infty}$ is compact, the Riesz Representation Theorem yields the existence of a Borel measure $\nu$ on $A_{\infty}$ such that
		\begin{equation*}
		q(f)=\int\limits_{A_{\infty}}fd\nu.
		\end{equation*}
		Consider the measure
		\begin{equation}\label{mu_nu}
		\mu(B_C)=\nu(\pi(C)),\quad C \in D_{\infty}.
		\end{equation}
		Then, $\mu$ is a measure on $(\Omega, \mathfrak{B})$. Moreover, for every rearrangement invariant and continuous $F:\Omega \to \RR$, we have
		\begin{equation*}
		\int\limits_{\Omega}F d\mu=\int\limits_{A_{\infty}}fd\nu,
		\end{equation*}
		where $f(a)=F(M,-D_pH_c(M,\nabla U(M))$, for some $M \in \pi^{-1}(a)$ and all $a\in A_{\infty}$. Note that $F_f=F$. Hence, we have that
		\[\int\limits_{A_{\infty}}fd\nu=q(f)=Q(F_f)=Q(F).
		\]
		Consequently, the previous two identities give

\[
Q(F)=\int\limits_{\Omega}F(M,N) d\mu.
\]
		\item[ii.] Suppose $\{\mu_n\}$ are probability measures on $(\Omega, \mathfrak{B})$. Consider the sequence of measures $\nu_n$ given by \eqref{mu_nu}. Since, $\nu_n$ are supported on a compact set $A_{\infty}$, they form a narrowly precompact sequence. Hence, there exists a measure $\nu_{\infty}$ such that $\nu_n \to \nu_{\infty}$ narrowly. Now, define $\mu_{\infty}$ via \eqref{mu_nu}. Then, it is straightforward to verify that $\mu_n$ converges to $\mu_{\infty}$ narrowly (tested against continuous rearrangement-invariant functions).  
	\end{itemize}
\end{proof}
Let $\tx$ be a solution of
the Euler-Lagrange equation \eqref{eq: InfDynamics}. Define
\[\Phi(t;M,N)=(\Phi^1(t;M,N), \Phi^2(t;M,N))=(\tx(t),\dot{\tx}(t)).
\]
\begin{definition}
	A measure $\mu$ defined on $(\Omega, \mathfrak{B})$  is  invariant under the Euler-Lagrange flow (invariant, for brevity), if for every $t>0$
	\begin{equation*}\label{invmeasuredef}
	\mu=\Phi(t ;\cdot , \cdot) \# \mu.
	\end{equation*}
\end{definition}
%\begin{corollary}
%	We have that
%	\begin{equation}\label{inf-H(c)}
%		-\bar{H}(c)=\inf \limits_{\mu}\int\limits_{\Omega}L_cd\mu,
%	\end{equation}
%	where the infimum is taken over all $\mathfrak{B}$-measurable invariant probability measures $\mu$. Moreover the infimum is achieved.
%\end{corollary}
	\begin{proof}[Proof of the Corollary \ref{minmeasures}]
		The finite-dimensional analog of this statement is presented in \cite{F} (Corollary 4.4.9). The proof extends to the
		infinite-dimensional setting without significant difficulties. Hence, here, we give the main components of the proof.
		
		The proof of the inequality
		\[\int\limits_{\Omega}L_cd\mu \geq -\bar{H}(c)
		\]
		for invariant measures $\mu$, extends to the infinite-dimensional setting with no changes. Next, we prove the existence of minimizing measures. Fix a point $M_0 \in \Omega$. Consider the linear bounded functional
		\[F \mapsto \frac{1}{t}\int\limits_{0}^{t}F(\Phi(t;M_0,N_0))dt,
		\]
		where $N_0=-D_pH_c(M_0,\nabla U(M_0))$. From Lemma \ref{Riezs}, we have that there exist  measures $\mu_t$ on $\mathfrak{B}$ such that
		\[\int 
		\limits_{\Omega} Fd\mu_t=\frac{1}{t}\int\limits_{0}^{t}F(\Phi(s;M_0,N_0))ds
		\]
		for all continuous rearrangement-invariant functions $F:\Omega \to \RR$. The family of measures $\{\mu_t\}$ is precompact due to Lemma \ref{Riezs}. Hence, there exists a measure $\mu_{\infty}$ and a sequence $t_n \to \infty$ such that $\mu_{t_n} \to \mu_{\infty}$ narrowly. In other words,
		\[\int \limits_{\Omega} Fd\mu_{\infty}=\lim\limits_{n \to \infty} \int \limits_{\Omega} Fd\mu_{t_n}=\lim\limits_{n \to \infty} \frac{1}{t_n}\int\limits_{0}^{t_n}F(\Phi(s;M_0,N_0))ds,
		\]
		for continuous rearrangement-invariant test functions $F$. Let $h>0$, we have
		\begin{align*}
		\int\limits_{\Omega}F(\Phi(h;M,N))d\mu_{\infty}&=\lim \limits_{n\to \infty}\frac{1}{t_n}\int\limits_{0}^{t_n} F(\Phi(h;\Phi(s;M_0,N_0)))ds\\
		&=\lim \limits_{n\to \infty}\frac{1}{t_n}\int\limits_{0}^{t_n} F(\Phi(s+h;M_0,N_0))ds\\
		&=\lim \limits_{n\to \infty}\frac{1}{t_n}\int\limits_{h}^{t_n+h} F(\Phi(s;M_0,N_0))ds\\
		&=\lim \limits_{n\to \infty}\frac{1}{t_n}\int\limits_{0}^{t_n} F(\Phi(s;M_0,N_0))ds\\
		&=\int\limits_{\Omega}F(M,N)d\mu_{\infty}.
		\end{align*}
		Therefore, $\mu_{\infty}$ is invariant under the Euler-Lagrange flow.
		
		Furthermore, set $F=L_c$ and observe that
		\begin{align*}
		\int\limits_{\Omega} L_cd\mu_{\infty}&=\lim \limits_{n\to \infty}\frac{1}{t_n}\int\limits_{0}^{t_n} L_c(\Phi(s;M_0,N_0))ds\\
		&=\frac{U(M_0)-U(\Phi^1(t_n;M_0,N_0))-t_n\bar{H}(c)}{t_n}\\
		&=-\bar{H}(c).
		\end{align*}
	The previous identity gives that $\mu_{\infty}$ is a minimizing measure.
	\end{proof}

\def\cprime{$'$} \def\cprime{$'$}

%\bibliographystyle{abbrv}
%\bibliography{mainbibliography}

\end{document}